\tikzstyle{vertex}=[circle, draw, inner sep=1pt, minimum size=8pt]
\def\noi{\noindent}
\def\cC{\mathcal{C}}
\newcommand{\Mod}[1]{\ (\mathrm{mod}\ #1)}
\newtheorem{theorem}{Theorem}[section]
\newtheorem{definition}{Definition}[theorem]
\title{\textbf{\sc Chromatic Topological Indices of Certain Cycle Related Graphs}}
\author{Smitha Rose}
\affil{\small Department of Mathematics\\ Christ University \\ Bengaluru, Karnataka, India.\\ {\tt smitha.rose@res.christuniversity.in}}
\author{Sudev Naduvath}
\affil{\small Centre for Studies in Discrete Mathematics,\\ Vidya Academy of Science \& Technology, \\ Thrissur-680501, Kerala, India.\\ {\tt sudevnk@gmail.com}}
\date{}
\begin{document}
\maketitle

\begin{abstract}

Topological indices are real numbers invariant under graph isomorphisms. Chromatic analogue of topological indices have been introduced recently in literature in 2017. 
Mainly, chromatic versions of  Zagreb indices are studied lately. This paper discusses the notion of chromatic topological and irregularity indices of certain cycle related graphs.

\end{abstract}

\vspace{0.2cm}

\noi \textbf{Key words}: Chromatic Zagreb index,  chromatic irregularity index

\vspace{0.25cm}

\noi \textbf{Mathematics Subject Classification 2010}: 05C15,05C38.

%---------------------------------------------------------------------------------------
%  Introduction%
%---------------------------------------------------------------------------------------

\section{Introduction} 

Chemical graph theory finds a variety of application in today's world especially in the field of pharmaceuticals. It concerns itself mainly with the mathematical modelling of the chemical phenomenon and the gaining of valuable insights into chemical behaviour. A \textit{topological index} of a graph $G$ is a real number preserved under isomorphisms, which constitutes one of the basic notions of molecular descriptors in chemical graph theory. \textit{Chromatic topological indices} of a graph $G$ is a term that has recently been coined \cite{KSM} to designate a new colouring version to these indices which embrace both proper colouring and topological indices. Here, the vertex degrees are interchanged with minimal colouring, keeping up the additional colouring conditions of proper colouring. The graphs discussed in this paper are finite, non-trivial, undirected, connected and without loops or multiple edges. For notation and terminology not explicitly defined here see \cite{FH,BM1,Cha,DBW}.

Generally, graph colouring is referred to as an assignment of colours, labels or weights to the vertices of a graph under consideration subject to certain conditions. A \textit{proper vertex colouring} of a graph $G$ is an assignment $\varphi:V(G)\to \cC$  of the vertices of $G$, where $\cC= \{c_1,c_2,c_3,\ldots,c_\ell\}$ is a set of colours such that adjacent vertices of $G$ are given different colours. The minimum number of colours required to apply a proper vertex colouring to $G$ is called the \textit{chromatic number} of $G$ and is denoted $\chi(G)$. The set of all vertices of $G$ which have the colour $c_i$ is called the \textit{colour class} of that colour $c_i$ in $G$ and may be denoted by $\cC_i$. The \textit{strength} of the colour class, denoted by $\theta(c_i)$, is the cardinality of each colour class of colour $c_i$. 

A proper vertex colouring consisting of the colours having minimum subscripts may be called a \textit{minimum parameter colouring} (see \cite{KSM}). If we colour the vertices of $G$ in such a way that $c_1$ is assigned to maximum possible number of vertices, then
$c_2$ is assigned to maximum possible number of remaining uncoloured vertices and proceed in this manner until all vertices are coloured, then such a colouring is called \textit{$\varphi^-$-colouring} of $G$. In a similar manner, if $c_\ell$ is assigned to maximum possible number of vertices, then $c_{(\ell-1)}$ is assigned to maximum possible number of remaining uncoloured vertices and proceed in this manner until all vertices are coloured, then such a colouring is called \textit{$\varphi^+$-colouring} of $G$ (see \cite{KSM}). 

For computational convenience, we define function $\zeta:V(G)\to \{1,2,3,\ldots,\ell\}$ such that $\zeta(v_i)=s\iff \varphi(v_i)=c_s,c_s\in \cC$. The total number of edges with end points having colours $c_t$ and $c_s$ is denoted by $\eta_{ts}$, where $t<s,1 \leq t,s \leq \chi(G)$.

Analogous to the definitions of Zagreb and irregularity indices of graphs (see \cite{Abd,Gut,Zho,Zhou}), the notions of different chromatic Zagreb indices and chromatic irregularity indices have been introduced in \cite{KSM} as follows:

\begin{definition}{\rm 
\cite{KSM} Let $G$ be a graph and let  $\cC=\{c_1,c_2,c_3,\ldots,c_\ell\}$ be a proper colouring of $G$ such that $\varphi(v_i)=c_s; 1\le i\le n, 1\le s\le \ell$. Then for $1\le t \le \ell\,!$, 
\begin{enumerate}\itemsep0mm 
\item[(i)]~ The \textit{first chromatic Zagreb index} of $G$, denoted by $M^{\varphi_t}_1(G)$, is defined as $M^{\varphi_t}_1(G)=\sum\limits_{i=1}^{3}(\zeta(v_i))^2=\sum\limits_{j=1}^{\ell}\theta(c_j)\cdot j^2$.
\item[(ii)]~ The \textit{second chromatic Zagreb index} of $G$, denoted by $M^{\varphi_t}_2(G)$, is defined as $M^{\varphi_t}_2(G)=\sum\limits_{i=1}^{n-1}\sum\limits_{j=2}^{n}(\zeta(v_i)\cdot \zeta(v_j)),\ v_iv_j \in E(G)$.
\item[(iii)]~ The \textit{chromatic irregularity index} of $G$, denoted by $M^{\varphi_t}_3(G)$, is defined as 
$ M^{\varphi_t}_3(G)= \sum\limits_{i=1}^{n-1}\sum\limits_{j=2}^{n}|\zeta(v_i)-\zeta(v_j)|,\ v_iv_j \in E(G)$. 
\end{enumerate}  
}\end{definition}

\begin{definition} \cite{RoSu}
The \textit{chromatic total irregularity index} of a graph $G$ corresponding to a proper colouring $\varphi:V(G)\to \cC=\{c_1,c_2, \ldots c_{\ell}\}$ is defined as 
$$M^{\varphi_t}_4(G)=\frac{1}{2}\sum\limits_{i=1}^{n-1}\sum\limits_{j=2}^{n}|\zeta(v_i)-\zeta(v_j)|,\ v_i,v_j \in V(G).$$
\end{definition}

In view of the above notions, the minimum and maximum chromatic Zagreb indices and the corresponding irregularity indices are defined in \cite{KSM} as follows.

\begin{eqnarray*}
M^{\varphi^-}_i(G) & = & \min\{M^{\varphi_t}_1(G): 1\le t \le \ell!\}, for 1 \leq i \leq 4 \\
M^{\varphi^+}_i(G) & = & \max\{M^{\varphi_t}_1(G): 1\le t \le \ell!\},for 1 \leq i \leq 4 . \\
\end{eqnarray*}

%---------------------------------------------------------------------------------------
%----------------------SECTION - 2 ----------------------------------------------
%---------------------------------------------------------------------------------------

\section{Chromatic Topological Indices of Flower Graphs}

A \textit{flower graph} $F_n$ is a graph which is obtained by joining the pendant vertices of a helm graph $H_n$ to its central vertex. The following theorem discusses the chromatic topological indices of the flower graph $F_n$.

\begin{theorem}\label{Thm-2.1}
For a flower graph $F_n$, we have
\begin{enumerate}
\item[(i)]~ $M_{1}^{\varphi^{-}}(F_n)=
\begin{cases}
5n+9; & \text{if $n$ is even};\\
5n+21; & \text{if $n$ is odd}.
\end{cases}$
\item[(ii)]~ $M_{2}^{\varphi^{-}}(F_n)= 
\begin{cases}
13n; & \text{if $n$ is even};\\
13n+16; & \text{if $n$ is odd}.
\end{cases}$
\item [(iii)]~ $M_{3}^{\varphi^{-}}(F_n)= 
\begin{cases}
5n; & \text{if $n$ is even};\\
5(n+1); & \text{if $n$ is odd}.
\end{cases}$
\item[(iv)]~ $M_{4}^{\varphi^{-}}(F_n)=
\begin{cases}
\frac{n^2+3n}{2}; & \text{if $n$ is even};\\
\frac{n^2+7n-2}{2}; & \text{if $n$ is odd}.
\end{cases}$
\end{enumerate}
\end{theorem}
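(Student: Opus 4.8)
The plan is to first fix coordinates on $F_n$, exhibit an explicit $\varphi^-$-colouring, verify it is greedy-optimal, and only then evaluate the four indices by bookkeeping over vertices and edges. I would write $v_0$ for the central vertex, $u_1,\dots,u_n$ for the rim vertices (so $u_iu_{i+1}\in E(F_n)$ with indices read modulo $n$, forming $C_n$), and $w_1,\dots,w_n$ for the outer vertices, where $u_iw_i,\ u_iv_0,\ w_iv_0\in E(F_n)$. Then $F_n$ has $2n+1$ vertices and $|E(F_n)|=4n$, with $\deg v_0=2n$, $\deg u_i=4$ and $\deg w_i=2$. Because $v_0$ is adjacent to every other vertex and the rim is a cycle, one checks $\chi(F_n)=3$ for even $n$ and $\chi(F_n)=4$ for odd $n$. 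I would record once the partition of $E(F_n)$ into its four edge types, each of size $n$: rim edges $u_iu_{i+1}$, spokes $v_0u_i$, outer spokes $v_0w_i$, and matching edges $u_iw_i$; this is the skeleton on which $M_2$ and $M_3$ are summed.

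\textbf{Constructing the $\varphi^-$-colouring.} First I would show $\alpha(F_n)=n$: deleting $v_0$ leaves the graph consisting of $C_n$ with one pendant $w_i$ at each $u_i$, and for any independent set $S$ there, putting $A=S\cap\{u_1,\dots,u_n\}$ gives $|S|=|A|+(n-|A|)=n$, while re-adding the universal vertex $v_0$ cannot help. For even $n$ I would colour the rim alternately $1,2$, give each $w_i$ the colour opposite to $u_i$, and set $\zeta(v_0)=3$, yielding class sizes $(n,n,1)$. For odd $n$ the odd rim forces one defect vertex onto a third colour; I would colour $u_1,\dots,u_{n-1}$ alternately $1,2$, assign each $w_i$ the opposite small colour with $\zeta(w_n)=1$, and --- the decisive choice --- set $\zeta(v_0)=3$ while giving the single defect rim vertex the colour $4$. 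This produces class sizes $(n,n-1,1,1)$. In both cases I would confirm greedy optimality: colour $1$ realises a maximum independent set, colour $2$ a maximum independent set of the remainder, and the final one or two singletons are forced.

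\textbf{Evaluating the indices.} Since $M_1$ and $M_4$ depend only on the multiset of colours, they follow at once from the class sizes: $M_1^{\varphi^-}=\sum_j\theta(c_j)\,j^2$, and $M_4^{\varphi^-}$ is $\tfrac12$ times the sum of $|p-q|$ over all unordered colour-value pairs weighted by the class sizes, which delivers $\tfrac{n^2+3n}{2}$ and $\tfrac{n^2+7n-2}{2}$. For $M_2$ and $M_3$ I would sum $\zeta(x)\zeta(y)$ and $|\zeta(x)-\zeta(y)|$ over each of the four edge types, handling the $2n$ edges incident to $v_0$ collectively through $\zeta(v_0)\sum_{x\neq v_0}\zeta(x)$ and tracking the parity of the rim index near the seam in the odd case.

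\textbf{The main obstacle.} The delicate point is that the $\varphi^-$-colouring of the odd flower graph is \emph{not} unique: after colours $1$ and $2$ are placed, the two surviving vertices $v_0$ and the defect rim vertex are adjacent, so the greedy rule permits colour $3$ to go to either. Both choices give the same $M_1$ and $M_4$, but because $v_0$ meets $2n$ edges while the defect rim vertex meets only $4$, the values of $M_2$ and $M_3$ differ sharply. Minimisation therefore forces the smaller residual colour $3$ onto the high-degree centre and the larger colour $4$ onto the low-degree defect vertex; I would justify that this realisation attains the minimum over \emph{all} proper colourings by an exchange argument on $\zeta(v_0)$, noting that $\zeta(v_0)\sum_{x\neq v_0}\zeta(x)$ is the dominant term. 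Getting this tie-break right is precisely what yields the constants $13n+16$ and $5(n+1)$ rather than the larger values produced by the naive greedy assignment.
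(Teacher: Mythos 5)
Your proposal is correct and follows essentially the same route as the paper: fix the explicit greedy $\varphi^-$-colouring --- colour classes of sizes $(n,n,1)$ for even $n$ and $(n,n-1,1,1)$ for odd $n$, with the centre coloured $3$ and the defect rim vertex coloured $4$ --- and then read off $M_1$ and $M_4$ from the class sizes and $M_2$, $M_3$ by counting the edges $\eta_{ts}$ between colour classes over the four edge types. The only substantive addition is your explicit tie-break/exchange argument showing the centre must receive colour $3$ rather than $4$ in the odd case; the paper makes exactly that assignment but silently, so your observation supplies a justification (and a well-definedness check for $M_2^{\varphi^-}$ and $M_3^{\varphi^-}$) that the paper omits.
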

\begin{proof}~
The chromatic number of a flower graph $F_n$ is $3$ when $n$ is even and is $4$ when $n$ is odd. Let $v_1, v_2, \ldots v_n$ be the non-adjacent vertices around $C_n$ and $u_1, u_2, \ldots u_n$ be the vertices of $C_n$  on the rim of the wheel and $u$ be the central vertex. In order to calculate the minimum values of chromatic Zagreb indices, we apply the $\varphi^{-}$ colouring pattern to $F_n$ as described  below.

If $n$ is even, then the sets $S_1=\{v_1, v_3, \ldots v_{n-1}, u_2, u_4, \ldots u_n\}$ and $S_2=\{v_2,v_4, \ldots, v_{n}, u_1, u_3, \ldots u_{n-1}\}$ form the two maximum independent sets with same cardinality $n$. We colour them with minimum colours $c_1$ and $c_2$ respectively. The remaining central vertex $u$ is coloured with $c_3$. If $n$ is odd, then the maximum independent sets $S_1=\{v_1, v_3, \ldots v_n, u_2, u_4, \ldots u_{n-1}\}$, $S_2=\{v_2, v_4, \ldots v_{n-1}, u_1, u_3, \ldots u_{n-2}\}$ have cardinality $n$ and $n-1$ and coloured with $c_1$ and $c_2$ respectively. Also, the vertices $u_n$ and $u$ are coloured with  $c_4$ and $c_3$ respectively. Then,

\noi \textit{Part (i):} In order to calculate $M_{1}^{\varphi^{-}}$ of $F_n$, we first colour the vertices as mentioned above and then proceed to consider the following cases.

\textit{Case-1:} Let $n$ be even, then we have $\theta(c_1)=\theta(c_2)=n$ and $\theta(c_3)= 1$. Therefore, the corresponding chromatic Zagreb index is given by
$$M^{\varphi^-}_1(F_n)=\sum\limits_{i=1}^{3}(\zeta(v_i))^2=5n+9.$$

\noi \textit{Case-2:} Let $n$ be odd. Then, we have $\theta(c_1)=n, \theta(c_2)=n-1$ and $\theta(c_3)=\theta(c_4)=1.$ Now, by the definition of first chromatic Zagreb index, we have $$M^{\varphi^-}_1(F_n)=\sum\limits_{i=1}^{4}(\zeta(v_i))^2= 5n+21.$$

\noi \textit{Part (ii):} To compute $M_{2}^{\varphi^{-}}$, we first colour the vertices as per the instructions in introductory part for both cases of $n$. Now, consider the following cases:

\noi \textit{Case-1:} If $n$ is even, then we observe that $\eta_{12}=2n, \eta_{23}=\eta_{13}=n$. The definition of second chromatic Zagreb index gives the sum $$M^{\varphi^-}_2(F_n)=\sum\limits_{1 \leq t,s \leq \chi(F_n)}^{t<s}ts\eta_{ts}=4n+3n+6n=13n.$$  

\noi \textit{Case- 2:} Let $n$ be odd. Here we see that $\eta_{12}=2n-3, \eta_{13}=n, \eta_{23}=n-1, \eta_{14}= 2,\\ \eta_{34}=\eta_{24}=1.$ Hence, we have the sum $$M^{\varphi^-}_2(F_n)=\sum\limits_{1 \leq t,s \leq \chi(F_n)}^{t<s}ts\eta_{ts}= 13n+16.$$

\noi \textit{Part (iii):} We calculate the minimum irregularity measurement by considering the following cases:

\noi \textit{Case- 1:} Let $n$ be even. Here, $\eta_{12}+ \eta_{23}=3n$ edges contribute the distance $1$ to the total summation, while $\eta_{13}=n$ contribute the distance $2$. The result follows from the following calculations: 
$$ M^{\varphi^-}_3(F_n)= \sum\limits_{i=1}^{n-1}\sum\limits_{j=2}^{n}|\zeta(v_i)-\zeta(v_j)|=5n.$$ 

\noi \textit{Case- 2:} Let $n$ be odd. Here we see that $\eta_{12}+\eta_{23}+ \eta_{34}$ edges contribute $1$ to the colour distance, $\eta_{13}+\eta_{24}$ edges contribute $2$, while $\eta_{14}$ edges contribute $3$.  Then the result follows from the following calculations: $$M^{\varphi^-}_3(F_n)= \sum\limits_{i=1}^{n-1}\sum\limits_{j=2}^{n}|\zeta(v_i)-\zeta(v_j)|=5(n+1).$$ 

\noi \textit{Part (iv):} To calculate the chromatic total irregularity indices of flower graphs, we have to consider all the possible vertex pairs and all colour combinations contributing non zero distances are considered according to the following two cases:\\
\noi \textit{Case- 1:} Let $n$ be even. The combinations possible are charted as $\{1,2\}, \{2,3\}$ contributing a distance $1$ and $\{1,3\}$ contributing $2$. Observe that $\theta(c_1)=\theta(c_2)=n$ and $\theta(c_3)=1$. Thus, we have

\begin{eqnarray*}
M^{\varphi^-}_4(F_n)& = & \frac{1}{2}\sum\limits_{u,v \in V(F_n)}|\zeta(u)-\zeta(v)|\\ 
& = & =\frac{n^2+3n}{2}.
\end{eqnarray*}
\noi \textit{Case- 2:} Let $n$ be odd. Here, the possible combinations which contribute to the colour distances are $\{1,2\}, \{2,3\},\{3,4\}$ contributing $1$, $\{1,3\}, \{2,4\}$ contributing $2$ and $\{1,4\}$ contributing $3$. We calculate the chromatic total irregularity as given below:

\begin{eqnarray*}
M^{\varphi^-}_4(F_n) & = & \frac{1}{2}\sum\limits_{u,v \in V(F_n)}|\zeta(u)-\zeta(v)|\\
& = &=\frac{n^2+7n-2}{2}
\end{eqnarray*}
\end{proof}

%---------------------------------------------------------------------------------------
%----Theorem 2.2 ----FLOWER GRAPH ------MAXIMUM COLORING
%---------------------------------------------------------------------------------------

Instead of $\varphi^-$ colouring, one can also work with $\varphi^+$ colouring of flower graphs using minimum parameter colouring. The results obtained are charted below as next theorem.

\begin{theorem}\label{Thm-2.2}
For a flower graph $F_n$, we have 
\begin{enumerate}
\item[(i)]~ $M_{1}^{\varphi^{+}}(F_n)=
\begin{cases}
13n+1; & \text{if $n$ is even}\\
25n-4; & \text{if $n$ is odd};
\end{cases}$
\item[(ii)]~ $M_{2}^{\varphi^{+}}(F_n)= 
\begin{cases}
17n; & \text{if $n$ is even}\\
38n-29; & \text{if $n$ is odd};
\end{cases}$
\item [(iii)]~ $M_{3}^{\varphi^{+}}(F_n)= 
\begin{cases}
5n; & \text{if $n$ is even}\\
5n+5; & \text{if $n$ is odd};
\end{cases}$
\item[(iv)]~ $M_{4}^{\varphi^{+}}(F_n)=
\begin{cases}
\frac{n^2+3n}{2}; & \text{if $n$ is even}\\
\frac{n^2+7n-2}{2}; & \text{if $n$ is odd}.
\end{cases}$
\end{enumerate}
\end{theorem}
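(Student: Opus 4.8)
\emph{Proof plan.} The plan is to exploit the fact that the $\varphi^+$-colouring of $F_n$ is nothing but the \emph{colour reversal} of the $\varphi^-$-colouring used in Theorem~\ref{Thm-2.1}. Writing $\chi=\chi(F_n)$ and letting $\zeta$ denote the labelling from that proof, I would set the new labelling to be $\chi+1-\zeta(v)$; this assigns the largest colours to the largest independent sets, which is exactly the $\varphi^+$-pattern. Concretely, for even $n$ the centre $u$ receives $c_1$ while the two maximum independent sets $S_1,S_2$ (each of strength $n$) receive $c_3$ and $c_2$; for odd $n$ the strength-$n$ set $S_1$ receives $c_4$, the strength-$(n-1)$ set $S_2$ receives $c_3$, the centre $u$ receives $c_2$, and the remaining rim vertex $u_n$ receives $c_1$.

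The first dividend of this observation is that parts (iii) and (iv) require no fresh computation. Since $M_3$ and $M_4$ are built solely from the colour differences $|\zeta(v_i)-\zeta(v_j)|$, and since reversal preserves every such difference, these two indices are invariant under colour reversal. Hence $M_3^{\varphi^+}(F_n)=M_3^{\varphi^-}(F_n)$ and $M_4^{\varphi^+}(F_n)=M_4^{\varphi^-}(F_n)$, which are exactly the values in Theorem~\ref{Thm-2.1}(iii),(iv); this explains why parts (iii) and (iv) of the two theorems coincide verbatim. For parts (i) and (ii) I would transport the $\varphi^-$ data through the reversal. For the first index,
\[
M_1^{\varphi^+}(F_n)=\sum_{t=1}^{\chi}\theta(c_t)\,(\chi+1-t)^2,
\]
into which I substitute the strengths recorded in Theorem~\ref{Thm-2.1} ($n,n,1$ for even $n$; $n,n-1,1,1$ for odd $n$). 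For the second index I would reuse the edge bookkeeping already established there, noting that reversal sends each pair count to $\eta_{ts}\mapsto\eta_{(\chi+1-s)(\chi+1-t)}$, and then evaluate $\sum_{t<s}ts\,\eta_{ts}$ against the transported counts.

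Carrying this out for even $n$ gives $\theta(c_1)=1,\ \theta(c_2)=\theta(c_3)=n$, whence $M_1^{\varphi^+}=1+4n+9n=13n+1$, while the transported counts $\eta_{12}=n,\ \eta_{13}=n,\ \eta_{23}=2n$ yield $M_2^{\varphi^+}=2n+3n+12n=17n$. The odd case is where the only genuine care is needed, and I expect the singleton bookkeeping to be the main obstacle: the two strength-one classes (the centre $u$ and the rim vertex $u_n$) must be labelled correctly. Reversal forces the universal vertex $u$ to take the larger singleton colour $c_2$ and $u_n$ to take $c_1$, which is also the assignment realising the maximum, since $u$ has the highest degree and therefore rewards a higher colour most. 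With this labelling the transported counts become $\eta_{12}=1,\ \eta_{13}=1,\ \eta_{14}=2,\ \eta_{23}=n-1,\ \eta_{24}=n,\ \eta_{34}=2n-3$; a quick check that these sum to the $4n$ edges of $F_n$ guards against a mislabelling, and feeding them into the two formulas above returns $M_1^{\varphi^+}=25n-4$ and $M_2^{\varphi^+}=38n-29$, as claimed. The remaining work is purely the arithmetic verification of these sums.
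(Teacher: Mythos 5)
Your proposal is correct and follows essentially the same route as the paper: it uses the identical reversed ($\varphi^+$) colouring, arrives at exactly the same strength and edge-count data ($\eta_{12}=\eta_{13}=n$, $\eta_{23}=2n$ for even $n$; $\eta_{12}=\eta_{13}=1$, $\eta_{14}=2$, $\eta_{23}=n-1$, $\eta_{24}=n$, $\eta_{34}=2n-3$ for odd $n$), and evaluates the same sums. Your explicit observation that colour reversal preserves $|\zeta(v_i)-\zeta(v_j)|$, making parts (iii) and (iv) coincide with Theorem~\ref{Thm-2.1} without recomputation, is a cleaner justification than the paper's bare remark that the rest ``follows in the same way,'' but it is a streamlining of the same argument rather than a different one.
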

\begin{proof}~
Here we follow $\varphi_+$ colouring of flower graphs to obtain desired results.

\noi When $n$ is even,  the vertices $S_1=\{v_1, v_3, \ldots v_{n-1}, u_2, u_4, \ldots u_n\}$ and $S_2=\{v_2, v_4, \ldots \\v_{n}, u_1, u_3, \ldots u_{n-1}\}$ forms the two maximum independent sets with same cardinality $n$. We colour them with maximum colours $c_3$ and $c_2$ respectively. The remaining central vertex $u$ is coloured with $c_1$. Then $\eta_{12}=\eta_{13}=n$ and $\eta_{23}=2n$.

\noi If $n$ is odd, we have chromatic number $4$. Here the maximum independent sets $S_1=\{v_1, v_3, \ldots v_n, u_2, u_4, \ldots u_{n-1}\}$, $S_2=\{v_2, v_4, \ldots v_{n-1}, u_1, u_3, \ldots u_{n-2}\}$ have cardinality $n$ and $n-1$ and coloured with $c_1$ and $c_2$ respectively. Also, the vertices $u_n$ and $u$ are coloured with  $c_4$ and $c_3$ respectively. to get maximum values. Thus $\eta_{12}=\eta_{13}=1, \eta_{14}=2$, $\eta_{23}=n-1, \eta_{24}=n$ and $\eta_{34}=2n-3$.

\noi The remaining part of the proof follows in the same way as mentioned in the proof of Theorem \ref{Thm-2.1}.
\end{proof}

%--------------------------------------------------------------------------------------
%----------------------SECTION - 3 -------------SUN FLOWER GRAPHS----------------------
%--------------------------------------------------------------------------------------

\section{Chromatic Topological Indices of Sunflower Graphs}

A \textit{sunflower graph} $SF_n$ is a graph obtained by replacing each edge of the rim of a wheel graph $W_n$ by a triangle such that two triangles share a common vertex if and only if the corresponding edges in $W_n$ are adjacent in $W_n$. The following result discusses the chromatic topological indices of a sunflower graph by using $\varphi_-$ colouring.

%--------------------------------------------------------------------------------------

\begin{theorem}\label{Thm-3.1}
For a sun flower graph $SF_n$, we have 
\begin{enumerate}
\item[(i)]~ $M_{1}^{\varphi^{-}}(SF_n)=
\begin{cases}
\frac{15n+2}{2}; & \text{if $n$ is even}\\
\frac{15n+21}{2}; & \text{if $n$ is odd};
\end{cases}$
\item[(ii)]~ $M_{2}^{\varphi^{-}}(SF_n)= 
\begin{cases}
\frac{27n}{2}; & \text{if $n$ is even}\\
\frac{27n+25}{2}; & \text{if $n$ is odd};
\end{cases}$
\item [(iii)]~ $M_{3}^{\varphi^{-}}(SF_n)= 
\begin{cases}
\frac{11n}{2}; & \text{if $n$ is even}\\
\frac{11n+11}{2}; & \text{if $n$ is odd};
\end{cases}$
\item[(iv)]~ $M_{4}^{\varphi^{-}}(SF_n)=
\begin{cases}
\frac{7n^2+6n}{8}; & \text{if $n$ is even}\\
\frac{7n^2+16n+1}{8}; & \text{if $n$ is odd}.
\end{cases}$
\end{enumerate}
\end{theorem}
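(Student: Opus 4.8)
The plan is to follow the template of Theorem~\ref{Thm-2.1}: first pin down the structure of $SF_n$ together with an explicit $\varphi^-$ colouring, read off the colour-class strengths $\theta(c_j)$ and the edge-type counts $\eta_{ts}$, and then substitute into the four definitions. Recall that $SF_n$ consists of one central vertex $u$, the $n$ rim vertices $u_1,\dots,u_n$ forming a cycle $C_n$, and $n$ outer vertices $v_1,\dots,v_n$, where each $v_i$ is joined to the consecutive rim vertices $u_i$ and $u_{i+1}$. Thus $SF_n$ has $2n+1$ vertices and $4n$ edges, partitioned into $n$ spokes $uu_i$, $n$ rim edges $u_iu_{i+1}$, and $2n$ triangle edges $v_iu_i,\ v_iu_{i+1}$.

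The crucial colouring observation is that $u$ is \emph{not} adjacent to any outer vertex, so $\{u,v_1,\dots,v_n\}$ is an independent set of size $n+1$; moreover adjoining any rim vertex $u_j$ would force the removal of the two outer vertices $v_{j-1},v_j$, so this set is a maximum independent set. I would therefore assign $c_1$ to all of $\{u,v_1,\dots,v_n\}$ and then $\varphi^-$-colour the leftover rim cycle $C_n$, using $c_2,c_3$ when $n$ is even and $c_2,c_3,c_4$ when $n$ is odd. This gives $\theta(c_1)=n+1$ in both cases, with $\theta(c_2)=\theta(c_3)=\tfrac{n}{2}$ in the even case and $\theta(c_2)=\theta(c_3)=\tfrac{n-1}{2},\ \theta(c_4)=1$ in the odd case. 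Part~(i) then follows at once from $M_1^{\varphi^-}=\sum_j\theta(c_j)\,j^2$, and part~(iv) from the strength form of the total irregularity index, $M_4^{\varphi^-}=\tfrac12\sum_{t<s}|t-s|\,\theta(c_t)\theta(c_s)$, summed over all vertex pairs.

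For parts (ii) and (iii) I would tabulate the $\eta_{ts}$ edge-type by edge-type. Since $u$ carries colour $1$, every spoke has type $(1,\mathrm{colour}(u_i))$; since each $v_i$ also carries colour $1$, the two triangle edges meeting a rim vertex $u_j$ are \emph{both} of type $(1,\mathrm{colour}(u_j))$, so each rim vertex of colour $s$ contributes exactly $2$ to $\eta_{1s}$. The rim edges, joining consecutive $u_i$, supply all remaining types. In the even case this yields $\eta_{12}=\eta_{13}=\tfrac{3n}{2}$ and $\eta_{23}=n$; in the odd case, placing $c_2,c_3$ alternately on $u_1,\dots,u_{n-1}$ and $c_4$ on $u_n$, one obtains $\eta_{12}=\eta_{13}=\tfrac{3(n-1)}{2}$, $\eta_{14}=3$, $\eta_{23}=n-2$, and $\eta_{24}=\eta_{34}=1$. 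A convenient sanity check at each stage is that the $\eta_{ts}$ sum to $4n$. Substituting into $M_2^{\varphi^-}=\sum_{t<s}ts\,\eta_{ts}$ and $M_3^{\varphi^-}=\sum_{t<s}|t-s|\,\eta_{ts}$ then delivers (ii) and (iii).

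I expect the only genuine obstacle to be the odd case, where $C_n$ is non-bipartite and is forced to spend a fourth colour $c_4$ on the single vertex $u_n$. The bookkeeping near this ``seam'' must be done with care: the two rim edges incident to $u_n$ and the two triangle edges at $u_n$ generate the off-pattern types $(2,4)$, $(3,4)$ and $(1,4)$, and it is easy to miscount these boundary contributions or to misallocate the single vertex of colour $4$. Once the inventory of the $4n$ edges and the $\binom{2n+1}{2}$ vertex pairs against this fixed $\varphi^-$ colouring is pinned down correctly, the remaining arithmetic is a routine substitution yielding the four stated closed forms.
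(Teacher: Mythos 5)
Your proposal is correct and follows essentially the same route as the paper: the identical $\varphi^-$ colouring (colour $c_1$ on $\{u,v_1,\dots,v_n\}$, the rim cycle coloured with $c_2,c_3$, plus $c_4$ on $u_n$ when $n$ is odd), the same strengths $\theta(c_j)$ and edge counts $\eta_{ts}$, and the same substitution into the four defining formulas. Your added consistency check that the $\eta_{ts}$ sum to $4n$ is a nice touch but does not change the argument.
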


%---------------------------------------------------------------------
%%%----PROOF- Thm 3.1 -----------------------------
%--------------------------------------------------------------------

\begin{proof}~

As we know, the sunflower graph $SF_n$ has chromatic number $3$ when $n$ is even and chromatic number $4$ when $n$ is odd. Let $v_1, v_2, \ldots v_n$ be the non-adjacent vertices on the outer cycle, $u_1, u_2, \ldots u_n$ be the vertices on the wheel and $u$ be the central vertex.  To obtain the minimum values of the chromatic topological indices we follow the $ \varphi^{-}$ colouring pattern to $SF_n$ as described  below.\\
When $n$ is even, we can find the maximum independent sets $S_1=\{v_1, v_2, \ldots v_n, u\}$ with cardinality $n+1$,  $S_2=\{ u_1, u_3, \ldots u_{n-1}\}$, $S_3=\{ u_2, u_4, \ldots u_{n}\}$ with same cardinality $\frac{n}{2}$. We colour them with minimum colours $c_1$, $c_2$ and $c_3$ respectively. 
If $n$ is odd, then $S_1=\{v_1, v_2, \ldots v_n, u\}$ with cardinality $n+1$ and we colour it with $c_1$. Now, $S_2=\{ u_1, u_3, \ldots u_{n-2}\}$, $S_3=\{ u_2, u_4, \ldots u_{n-1}\}$ are next maximum independent sets with same cardinality $\frac{n-1}{2}$. So we colour it with $c_2$ and $c_3$ respectively and the colour $c_4$ is assigned to vertex $u_n$. 

\noi \textit{Part (i):}  In order to find $M_{1}^{\varphi^{-}}$ of $SF_n$, we first colour the vertices as mentioned above and then proceed to consider the following cases:

\textit{Case-1:} Let $n$ be even.Then, we have $\theta(c_1)= n+1$ and $\theta(c_2)=\theta(c_3)=\frac{n}{2}$. Therefore, the corresponding chromatic Zagreb index  is given by
$$M^{\varphi^-}_1(SF_n)=\sum\limits_{i=1}^{3}(\zeta(v_i))^2=(n+1)+\frac{4n}{2}+\frac{9n}{2}=\frac{15n+2}{2}.$$

\noi \textit{Case-2:} Let $n$ be odd. Then, we have $\theta(c_1)=n+1, \theta(c_2)=\theta(c_3)=\frac{n-1}{2}$ and $\theta(c_4)=1.$ Now, by the definition of first chromatic Zagreb index, we have $$M^{\varphi^-}_1(SF_n)=\sum\limits_{i=1}^{4}(\zeta(v_i))^2=(n+1)+2(n-1)+\frac{9(n-1)}{2}+16= \frac{15n+21}{2}.$$

\noi \textit{Part (ii):} We colour the vertices as per the instructions in introductory part for even and odd cases of $n$. Now consider the following cases:

\noi \textit{Case-3:} Let $n$ be even. In this case, we see that $\eta_{23}=n, \eta_{12}=\eta_{13}=\frac{3n}{2}$. The definition of second chromatic Zagreb index gives the sum $$M^{\varphi^-}_2(SF_n)=\sum\limits_{1 \leq t,s \leq \chi(SF_n)}^{t<s}ts\eta_{ts}=\frac{6n}{2}+\frac{9n}{2}+6n=\frac{27n}{2}.$$  

\noi \textit{Case-4:} Let $n$ be odd. Here we see that $\eta_{12}= \eta_{13}= \frac{3(n-1)}{2}, \eta_{23}=n-2, \eta_{14}= 3,\\ \eta_{34}=\eta_{24}=1.$ Hence, we have the sum $$M^{\varphi^-}_2(SF_n)=\sum\limits_{1 \leq t,s \leq \chi(SF_n)}^{t<s}ts\eta_{ts}=3(n-1)+\frac{9(n-1)}{2}+6(n-2)+32=\frac{27n+25}{2}.$$

\noi \textit{Part (iii):} To find the minimum irregularity measurement, consider the following cases:
\noi \textit{Case-5:} Let $n$ be even. Here $\eta_{12}+ \eta_{23}=\frac{5n}{2}$ edges contribute the distance $1$ to the total summation while $\eta_{13}=\frac{3n}{2}$ contribute the distance $2$. The result follows from the following calculations:
$$ M^{\varphi^-}_3(SF_n)= \sum\limits_{i=1}^{n-1}\sum\limits_{j=2}^{n}|\zeta(v_i)-\zeta(v_j)|=\frac{3n}{2}+\frac{6n}{2}+n=\frac{11n}{2}.$$ 
\noi \textit{Case-6:} Let $n$ be odd. Here we see that, $\eta_{12}+\eta_{23}+ \eta_{34}$ edges contribute $1$ to the colour distance, $\eta_{13}+\eta_{24}$ edges contribute $2$, while $\eta_{14}$ edges contribute $3$.  Then, the result follows from the following calculations: $$M^{\varphi^-}_3(SF_n)= \sum\limits_{i=1}^{n-1}\sum\limits_{j=2}^{n}|\zeta(v_i)-\zeta(v_j)|= \frac{3(n-1)}{2}+3(n-1)+(n-2)+12=\frac{11n+11}{2}.$$ 

\noi \textit{Part (iv):}  To calculate the total irregularity of $SF_n$, all the possible vertex pairs from $SF_n$ have to be considered and their possible colour distances are determined. The possibility of the vertex pairs which contribute to the colour distance can be classified according to the following two cases.

\noi \textit{Case- 7:} Let $n$ be even. The combinations possible are charted as $\{1,2\}, \{2,3\}$ contributing $1$ and $\{1,3\}$ contributing $2$. Observe that $\theta(c_2)=\theta(c_3)=\frac{n}{2}$ and $\theta(c_1)=n+1$. Thus, we have
\begin{eqnarray*}
M^{\varphi^-}_4(SF_n)& = & \frac{1}{2}\sum\limits_{u,v \in V(SF_n)}|\zeta(u)-\zeta(v)|\\ 
& = & \frac{7n^2+6n}{8}.
\end{eqnarray*}
\noi \textit{Case-8:} Let $n$ be odd. Here, the possible combinations which contributes to the colour distances are $\{1,2\}, \{2,3\}, \{3,4\}$ contributing $1$, $\{1,3\}, \{2,4\}$ contributing $2$ and $\{1,4\}$ contributing $3$. We calculate the total irregularity as given below:

\begin{eqnarray*}
M^{\varphi^-}_4(SF_n) & = & \frac{1}{2}\sum\limits_{u,v \in V(SF_n)}|\zeta(u)-\zeta(v)|\\
& = & \frac{7n^2+16n+1}{8}
\end{eqnarray*}

\end{proof}
%-------------------------------------------------------------------------------------
%----Theorem 3.2 ----SUNFLOWER GRAPH ------MAXIMUM COLORING
%-------------------------------------------------------------------------------------

Using the minimum parameter colouring we can also work on $\varphi_+$ colouring  of sunflower graphs. Next theorem deals with this matter.

\begin{theorem}\label{Thm-3.2}
For a sun flower graph $SF_n$, we have 
\begin{enumerate}
\item[(i)]~ $M_{1}^{\varphi^{+}}(SF_n)=
\begin{cases}
\frac{23n+18}{2}; & \text{if $n$ is even}\\
\frac{45n+21}{2}; & \text{if $n$ is odd};
\end{cases}$
\item[(ii)]~ $M_{2}^{\varphi^{+}}(SF_n)= 
\begin{cases}
17n; & \text{if $n$ is even}\\
36n-25; & \text{if $n$ is odd};
\end{cases}$
\item [(iii)]~ $M_{3}^{\varphi^{+}}(SF_n)= 
\begin{cases}
\frac{11n}{2}; & \text{if $n$ is even}\\
\frac{11n+11}{2}; & \text{if $n$ is odd};
\end{cases}$
\item[(iv)]~ $M_{4}^{\varphi^{+}}(SF_n)=
\begin{cases}
\frac{7n^2+6n}{8}; & \text{if $n$ is even}\\
\frac{7n^2+16n+1}{8}; & \text{if $n$ is odd}.
\end{cases}$
\end{enumerate}
\end{theorem}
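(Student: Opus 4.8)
The plan is to exploit the fact that the $\varphi^+$-colouring of $SF_n$ uses exactly the same partition of $V(SF_n)$ into independent sets as the $\varphi^-$-colouring of Theorem~\ref{Thm-3.1}, only with the colour labels reversed. Indeed, a minimum parameter $\varphi^+$-colouring fills colours greedily from the top, so the largest independent set $S_1=\{v_1,\dots,v_n,u\}$ (of strength $n+1$) now receives the highest colour $c_{\chi(SF_n)}$, the two rim classes of equal size receive the next colours, and (when $n$ is odd) the singleton $\{u_n\}$ receives $c_1$. Concretely I would first record the reversed colour map $c_j\mapsto c_{\chi+1-j}$ and hence the new strengths: for $n$ even, $\theta(c_3)=n+1$ and $\theta(c_2)=\theta(c_1)=\tfrac{n}{2}$; for $n$ odd, $\theta(c_4)=n+1$, $\theta(c_3)=\theta(c_2)=\tfrac{n-1}{2}$ and $\theta(c_1)=1$.

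The indices $M_3$ and $M_4$ are then read off for free. Both are built solely from the colour distances $|\zeta(u)-\zeta(v)|$, and the reversal $j\mapsto\chi+1-j$ preserves every such distance, since $|(\chi+1-t)-(\chi+1-s)|=|t-s|$. Hence $M_3^{\varphi^+}(SF_n)=M_3^{\varphi^-}(SF_n)$ and $M_4^{\varphi^+}(SF_n)=M_4^{\varphi^-}(SF_n)$, so parts (iii) and (iv) follow immediately from the corresponding parts of Theorem~\ref{Thm-3.1}. For part (i) I would substitute the reversed strengths into $M_1^{\varphi_t}=\sum_j\theta(c_j)\,j^2$; the squared weights $j^2$ are no longer symmetric, so placing the large class $S_1$ at the top colour is precisely what inflates the value, yielding $\tfrac{23n+18}{2}$ and $\tfrac{45n+21}{2}$.

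For part (ii) the key step is the reclassification of edges: an edge counted as $\eta_{ts}$ under $\varphi^-$ is counted as $\eta_{(\chi+1-s)(\chi+1-t)}$ under $\varphi^+$, so I would transport the rim/spoke/petal tallies already established in the proof of Theorem~\ref{Thm-3.1} through this involution and then evaluate $\sum_{t<s}ts\,\eta_{ts}$. In the odd case this produces $\eta_{12}=\eta_{13}=1$, $\eta_{14}=3$, $\eta_{23}=n-2$ and $\eta_{24}=\eta_{34}=\tfrac{3(n-1)}{2}$, whence the claimed $36n-25$.

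The main obstacle is exactly this last sum for $M_2$: unlike $M_3$ and $M_4$, the quantity $\sum ts\,\eta_{ts}$ is \emph{not} invariant under colour reversal, so it must be recomputed from scratch rather than inherited. I would be especially careful in the even case, re-deriving the three tallies $\eta_{12}=n$ and $\eta_{13}=\eta_{23}=\tfrac{3n}{2}$ directly from the rim/spoke/petal decomposition and checking the resulting $\sum_{t<s}ts\,\eta_{ts}$ against the stated value, since it is easy to conflate this count with the structurally different flower-graph count of Theorem~\ref{Thm-2.2}. Finally, to justify that this colouring realises the \emph{maximum} rather than merely some admissible value, I would verify that, among the $\chi!$ label permutations of the fixed partition, assigning the largest products of colour labels to the heaviest inter-class edge weights is optimal, which is exactly what the greedy-from-top assignment achieves.
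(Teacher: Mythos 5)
Your colouring is exactly the paper's: reverse the labels on the same partition used in Theorem~\ref{Thm-3.1}, so the class $\{v_1,\dots,v_n,u\}$ of strength $n+1$ gets the top colour. Your strengths and edge tallies coincide with those recorded in the paper's proof, and parts (i), (iii), (iv) and the odd case of (ii) all go through: the observation that $M_3$ and $M_4$ are invariant under the reversal $j\mapsto \chi+1-j$ is correct (and tidier than the paper's ``follows in a similar manner''), part (i) is a direct substitution, and your odd-case tallies give $2+3+12+6(n-2)+8\cdot\tfrac{3(n-1)}{2}+12\cdot\tfrac{3(n-1)}{2}=36n-25$ as claimed.

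The genuine problem is the even case of part (ii), which you flagged but never resolved. Carry out the check you postponed: with $\eta_{12}=n$ and $\eta_{13}=\eta_{23}=\tfrac{3n}{2}$ (these tallies are correct, and they are also exactly what the paper's own proof states), one gets $\sum_{t<s}ts\,\eta_{ts}=2n+3\cdot\tfrac{3n}{2}+6\cdot\tfrac{3n}{2}=\tfrac{31n}{2}$, which is \emph{not} $17n$. Worse, no colouring can rescue the stated value: for even $n$ the proper $3$-colouring partition of $SF_n$ is unique (the centre is adjacent to every rim vertex, so the even rim cycle must alternate the two remaining colours, which in turn forces every petal vertex into the centre's class), and the maximum of $M_2$ over the $3!$ label permutations of this partition is $\max\{\tfrac{27n}{2},\,15n,\,\tfrac{31n}{2}\}=\tfrac{31n}{2}<17n$. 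So the value $17n$ in the theorem is unattainable; as you suspected, it is the flower-graph value of Theorem~\ref{Thm-2.2} transcribed by mistake, and the paper never notices because it stops before doing this arithmetic. Your proposal, completed honestly, refutes part (ii) for even $n$ rather than proving it; the correct statement is $M_2^{\varphi^+}(SF_n)=\tfrac{31n}{2}$ for even $n$.
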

\begin{proof}~
Here we follow $\varphi_+$ colouring of sun flower graphs to obtain desired results.

\noi When $n$ is even, we have $\theta(c_3)= n+1$ and $\theta(c_2)=\theta(c_1)=\frac{n}{2}$. Then $\eta_{23}=\eta_{13}=\frac{3n}{2}$ and $\eta_{12}=n$.

\noi Let $n$ be odd, we have chromatic number $4$. Here the cardinality of the colour classes are$\theta(c_4)= n+1$,  $\theta(c_2)=\theta(c_3)=\frac{n-1}{2}$ and $\theta(c_1)= 1$.  Thus we have $\eta_{12}=\eta_{13}=1, \eta_{14}=3$, $\eta_{34}=\eta_{24}=\frac{3(n-1)}{2}$ and $\eta_{23}=n-2$.

\noi The remaining part of the proof follows in a similar manner as mentioned in the proof of Theorem \ref{Thm-3.1} 
\end{proof}

%------------------------------------------------------------------------------------
%----------------------SECTION - 4 -------------CLOSED SUN FLOWER GRAPHS-------------
%------------------------------------------------------------------------------------

\section{Chromatic Topological Indices of Closed Sunflower Graphs}

A \textit{closed sunflower graph} $CSF_n$ is a graph obtained by joining the independent vertices of a sunflower graph $SF_n$, which are not adjacent to its central vertex so that these vertices induces a cycle on $n$ vertices. 
The following result provides the expressions for the topological indices of a closed sunflower graph.

\begin{theorem}\label{Thm-4.1}
For a closed sunflower graph $CSF_n$, we have
\begin{enumerate}
\item[(i)] $M_{1}^{\varphi^{-}}(CSF_n)= 
\begin{cases}
\frac{28n+48}{3}; &n \equiv 0 \Mod{3}\\
\frac{28n+143}{3}; &n \equiv 1 \Mod{3}\\
\frac{28n+109}{3}; &n \equiv -1 \Mod{3};
\end{cases}$
\item[(ii)] $M_{2}^{\varphi^{-}}(CSF_n)= 
\begin{cases}
\frac{68n}{3}; &n \equiv 0 \Mod{3}\\
\frac{74n+136}{3}; &n \equiv 1 \Mod{3}\\
\frac{68n+134}{3}; &n \equiv -1 \Mod{3};
\end{cases}$
\item[(iii)] $M_{3}^{\varphi^{-}}(CSF_n)= 
\begin{cases}
\frac{22n}{3}; &n \equiv 0 \Mod{3}\\
\frac{25n+8}{3}; &n \equiv 1 \Mod{3}\\
\frac{22n+10}{3}; &n \equiv -1 \Mod{3};
\end{cases}$
\item[(iv)] $M_{4}^{\varphi^{-}}(CSF_n)= 
\begin{cases}
\frac{16n^2+36n}{18}; &n \equiv 0 \Mod{3}\\
\frac{16n^2+94n-92}{18}; &n \equiv 1 \Mod{3}\\
\frac{16n^2+74n-32}{18}; &n \equiv -1 \Mod{3};
\end{cases}$
\end{enumerate}
\end{theorem}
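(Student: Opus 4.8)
The plan is to reduce $CSF_n$ to a graph I understand well before doing any bookkeeping. Writing $u$ for the hub, $u_1,\dots,u_n$ for the inner rim cycle, and $v_1,\dots,v_n$ for the outer cycle (with $v_i$ adjacent to $u_i,u_{i+1}$ and to $v_{i\pm1}$), the first observation is that the subgraph $A_n$ spanned by the $u_i$ and $v_i$ is precisely the antiprism on $2n$ vertices: listing the vertices cyclically as $u_1,v_1,u_2,v_2,\dots,u_n,v_n$ makes every adjacency either a distance-one or a distance-two step, so $A_n=C_{2n}^2$, the square of the $2n$-cycle. A proper colouring of $C_{2n}^2$ is the same as colouring $C_{2n}$ so that every three consecutive vertices get distinct colours, whence $\chi(A_n)=3$ exactly when $3\mid n$ (via the periodic pattern $1,2,3,1,2,3,\dots$, which closes up iff $3\mid 2n$) and $\chi(A_n)=4$ otherwise. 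Since $u$ is adjacent to all of $u_1,\dots,u_n$, and since any proper $3$-colouring of $A_n$ already forces all three colours onto the inner cycle (if the inner cycle used only two colours it would alternate, forcing every $v_i$ into the third colour and breaking the outer cycle), the hub always demands a fourth colour. Hence $\chi(CSF_n)=4$ in all three cases, and the residue of $n$ modulo $3$ will enter only through the colour-class sizes, not through $\chi$.

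Next I would write down the $\varphi^{-}$ colouring explicitly and compute $M_1$. For $3\mid n$ the periodic pattern colours $A_n$ with three classes of equal size $\tfrac{2n}{3}$ and the hub takes $c_4$, giving $\theta(c_1)=\theta(c_2)=\theta(c_3)=\tfrac{2n}{3}$ and $\theta(c_4)=1$; then $M_1^{\varphi^-}=\sum_j\theta(c_j)j^2=\tfrac{2n}{3}(1+4+9)+16=\tfrac{28n+48}{3}$, matching the stated value. For $n\not\equiv0\ \Mod 3$ the periodic pattern cannot close up, so one must repair a short ``seam.'' The key is to place the unavoidable $c_4$-vertices of $A_n$ on outer vertices $v_i$, since the hub is non-adjacent to every $v_i$ and may therefore share colour $c_4$ with them; a short argument on $C_{2n}^2$ shows one extra $c_4$-vertex suffices when $2n\equiv1\ \Mod 3$ (i.e. $n\equiv -1$) and two are needed when $2n\equiv2\ \Mod 3$ (i.e. $n\equiv1$). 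Here one should also note $\alpha(CSF_n)=\max\{\lfloor 2n/3\rfloor,\,1+\lfloor n/2\rfloor\}=\lfloor 2n/3\rfloor$ for large $n$, which fixes $\theta(c_1)$. Reading off the seam-corrected class sizes then feeds the index formulas, and the $O(1)$ corrections are what split the answer into the three congruence rows.

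For the remaining indices I would set up the cross-class edge counts $\eta_{ts}$. The edge set of $CSF_n$ splits cleanly into the $4n$ antiprism edges (the distance-one and distance-two pairs of $C_{2n}$) and the $n$ hub spokes $uu_i$. In the periodic region each pair $\{t,s\}\subseteq\{1,2,3\}$ occurs with a fixed frequency, and the spokes contribute to $\eta_{t4}$ according to how often colour $t$ lands on the odd positions $u_i$; for $3\mid n$ this gives $\eta_{12}=\eta_{13}=\eta_{23}=\tfrac{4n}{3}$ and $\eta_{14}=\eta_{24}=\eta_{34}=\tfrac{n}{3}$, whence $M_2^{\varphi^-}=\sum_{t<s}ts\,\eta_{ts}=\tfrac{68n}{3}$ and $M_3^{\varphi^-}=\sum_{t<s}|t-s|\,\eta_{ts}=\tfrac{22n}{3}$. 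The total irregularity index is purely a function of the class sizes through $M_4^{\varphi^-}=\tfrac12\sum_{t<s}|t-s|\,\theta(c_t)\theta(c_s)$, which for $3\mid n$ evaluates to $2\big(\tfrac{2n}{3}\big)^2+3\cdot\tfrac{2n}{3}=\tfrac{16n^2+36n}{18}$. The same two formulas, fed the seam-corrected $\theta$'s and $\eta$'s, are what generate the $n\equiv\pm1$ rows for $M_2,M_3,M_4$.

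The hard part will be the seam analysis for $3\nmid n$, where two quantities must be controlled at once: the exact multiplicities of the $c_4$-vertices and of each colour on the odd positions (feeding $\theta$ and the spoke counts $\eta_{t4}$), and the handful of irregular antiprism edges adjacent to the repaired arc (perturbing $\eta_{12},\eta_{13},\eta_{23}$ away from their periodic values). Because these perturbations are $O(1)$, they can only shift the additive constants, which is consistent with every listed formula sharing the same leading term across residues; but pinning the constants down exactly is delicate and is precisely where the three cases genuinely differ. The most serious point needing care is optimality itself: I would want to verify that the chosen colouring really minimises $\sum_j\theta(c_j)j^2$ over all proper $4$-colourings, since the informal ``largest class first'' recipe does not by itself certify a global minimum, and a wasteful seam repair (for instance one that pushes inner vertices or the hub into unnecessarily high colours) inflates all four indices. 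Confirming that the seam repair is the \emph{cheapest} one, rather than merely exhibiting some economical colouring, is the step I would scrutinise hardest, as a miscount there propagates into the additive constants of all four parts.
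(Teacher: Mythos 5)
Your structural reduction is correct and is genuinely different from (and sharper than) the paper's argument, but it cannot end in a proof of the stated theorem, because you and the paper disagree on a point where you are right and the paper is wrong. The paper's proof rests on the claim that $\chi(CSF_n)=5$ when $n\not\equiv 0\Mod{3}$, and all of its colour classes and edge counts for those residues are built from five colours (with $\theta(c_5)=1$). Your antiprism analysis shows this is false: $\chi(CSF_n)=4$ for every $n$. Indeed, for even $n$ colour the inner cycle $1,2,1,2,\ldots$, the outer cycle $3,4,3,4,\ldots$ and the hub $3$ (the hub sees only inner vertices); for odd $n$ colour the inner cycle $1,2,1,2,\ldots,1,2,3$, the hub $4$, and complete the outer cycle from the lists $\{x_i,4\}$, where $x_i$ is the inner colour missing at $v_i$ --- each $v_i$ sees exactly two inner colours, these $2$-lists are not all identical, so the odd outer cycle is list-colourable. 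Both colourings are proper, and the antiprism $C_{2n}^2\subseteq CSF_n$ forces $\chi\ge 4$, so $\chi(CSF_n)=4$ in all three residue classes.

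The genuine gap in your proposal is therefore not the seam analysis you flag as delicate, but your unexamined assumption that the seam-corrected class sizes will reproduce the stated constants. They will not: a minimum parameter ($\varphi^-$) colouring must use exactly $\chi(G)$ colours, so for $n\equiv\pm1\Mod{3}$ the admissible colourings have four classes, and these give strictly smaller indices than the five-colour values in the theorem. Concretely, for $n=4$ the proper $4$-colouring $c_1=\{u,v_1,v_3\}$, $c_2=\{u_1,u_3\}$, $c_3=\{u_2,u_4\}$, $c_4=\{v_2,v_4\}$ yields $M_1=3+8+18+32=61$, while part (i) claims $\frac{28\cdot 4+143}{3}=85$; for $n=7$ the colouring above has class sizes $(4,4,4,3)$ and gives $M_1=104$ versus the claimed $113$. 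So carrying out your plan faithfully refutes parts (i)--(iv) for $n\equiv\pm1\Mod{3}$ rather than proving them; only for $n\equiv 0\Mod{3}$, where your colouring, class sizes $\bigl(\tfrac{2n}{3},\tfrac{2n}{3},\tfrac{2n}{3},1\bigr)$ and edge counts $\eta_{12}=\eta_{13}=\eta_{23}=\tfrac{4n}{3}$, $\eta_{14}=\eta_{24}=\eta_{34}=\tfrac{n}{3}$ coincide with the paper's, do your computations and the theorem agree. A smaller quibble: your claim $\alpha(CSF_n)=\lfloor 2n/3\rfloor$ ignores the hub-based independent sets of size $1+\lfloor n/2\rfloor$; for $n=4$ the latter is strictly larger, which is exactly what makes the profile $(3,2,2,2)$ above available, so any corrected version of the theorem must track both families of independent sets.
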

\begin{proof}~
A closed sunflower graph $CSF_n$ has chromatic number $4$ when $n \equiv 0 \Mod{3}$ and has chromatic number $5$ otherwise. Let $v_1, v_2, \ldots v_n$ be the vertices of the inner wheel and $u_1, u_2, \ldots u_n$ be the vertices on the rim of the outer wheel and $v$ be the central vertex. In order to calculate the minimum values of chromatic Zagreb indices we apply the $\varphi^{-}$ colouring pattern to $CSF_n$ as described below:

Let $n \equiv 0 \Mod{3}$ be assumed. When $n \equiv 0 \Mod{3}$, obeying the rules of minimum colouring, we can find three colour classes with same cardinality $\frac{2n}{3}$ and we colour them with minimal colours $c_1, c_2, c_3$. The central vertex $v$ is coloured with $c_4$.

Let $n \equiv 1 \Mod{3}$ be assumed. Here we form three colour classes with the maximum independent sets having the same cardinality $\frac{2(n-1)}{3}$ and we colour them with minimal colours $c_1, c_2, c_3$. Also the vertices $u_1$ and $v_n$ are coloured with  $c_4$ and the central vertex $v$ is coloured with $c_5$. 

Let $n \equiv -1 \Mod{3}$ be assumed. Here we form three colour classes with the maximum independent sets having the same cardinality $\frac{2n-1}{3}$ and we colour them with minimal colours $c_1, c_2, c_3$. Also the balance two vertices $v$ and $v_n$ are coloured with  $c_4$ and $c_5$ respectively. Now we can deal with the parts of the proof. 

\noi \textit{Part (i):}  In order to calculate $M_{1}^{\varphi^{-}}$ of $CSF_n$, we first colour the vertices as mentioned above and then proceed to consider the following cases.

\textit{Case-1.1:} Let $n \equiv 0 \Mod{3}$. Then, we have $\theta(c_1)=\theta(c_2)=\theta(c_3)=\frac{2n}{3}$ and $\theta(c_4)= 1$. Therefore, the corresponding chromatic Zagreb index  is given by
$$M^{\varphi^-}_1(CSF_n)=\sum\limits_{i=1}^{4}(\zeta(v_i))^2=\frac{2n}{3}+\frac{8n}{3}+\frac{18n}{3}+16=\frac{28n+48}{3}.$$

\noi \textit{Case-1.2:} Let $n \equiv 1 \Mod{3}$. Then, we have $\theta(c_1)=\theta(c_2)=\theta(c_3)=\frac{2(n-1)}{3}$,  $\theta(c_4)= 2$ and $\theta(c_5)= 1$. Therefore, the corresponding chromatic Zagreb index  is given by
$$M^{\varphi^-}_1(CSF_n)=\sum\limits_{i=1}^{5}(\zeta(v_i))^2=25+32+\frac{18(n-1)}{3}+\frac{8(n-1)}{3}+\frac{2(n-1)}{3}=\frac{28n+143}{3}.$$

\noi \textit{Case-1.3:} Let $n \equiv -1 \Mod{3}$. Then we have $\theta(c_1)=\theta(c_2)=\theta(c_3)=\frac{2n-1}{3}$ and $\theta(c_5)= \theta(c_4)= 1$. Therefore, the corresponding chromatic Zagreb index  is given by
$$M^{\varphi^-}_1(CSF_n)=\sum\limits_{i=1}^{5}(\zeta(v_i))^2=\frac{28(n-1)}{3}+41=\frac{28n+109}{3}.$$

\noi \textit{Part (ii):} We colour the vertices as per the instructions in introductory part for different cases of $n$. Now consider the following cases:

\textit{Case-2.1:} Let $n \equiv 0 \Mod{3}$.  Here, we see that $\eta_{12}= \eta_{23}=\eta_{13}=\frac{4n}{3}$ and $\eta_{14}= \eta_{24}=\eta_{34}=\frac{n}{3}$. The definition of second chromatic Zagreb index gives the sum $$M^{\varphi^-}_2(CSF_n)=\sum\limits_{1 \leq t,s \leq \chi(CSF_n)}^{t<s}ts\eta_{ts}=\frac{44n}{3}+\frac{24n}{3}=\frac{68n}{3}.$$  

\noi \textit{Case-2.2:} Let $n \equiv 1 \Mod{3}$. Here, we see that $\eta_{12}= \eta_{23}=\frac{4n-7}{3}$, $\eta_{13}=\frac{4n-10}{3}$, $\eta_{15}= \eta_{25}=\eta_{35}=\frac{n-1}{3}$, $\eta_{14}= \eta_{34}=3$, $ \eta_{24}=2$  and $\eta_{45}=1$. The definition of second chromatic Zagreb index gives the sum $$M^{\varphi^-}_2(CSF_n)=\sum\limits_{1 \leq t,s \leq \chi(CSF_n)}^{t<s}ts\eta_{ts}= \frac{8(4n-7)}{3}+\frac{3(4n-10)}{3}+\frac{30(n-1)}{3}+84=\frac{74n+136}{3}.$$  

\noi \textit{Case-2.3:} Let $n \equiv -1 \Mod{3}$.Here we see that $\eta_{12}= \frac{4n-2}{3}$, $\eta_{23}=\eta_{13}=\frac{4n-5}{3}$, $\eta_{14}= \eta_{24}=\frac{n-2}{3}$, $\eta_{34}=\frac{n+1}{3}$, $\eta_{35}= 2$ and $\eta_{15}=\eta_{45}= 1$ . The definition of second chromatic Zagreb index gives the sum $$M^{\varphi^-}_2(CSF_n)=\sum\limits_{1 \leq t,s \leq\chi(CSF_n)}^{t<s}ts\eta_{ts}= \frac{2(4n-2)}{3}+\frac{9(4n-5)}{3}+\frac{12(n-2)}{3}+\frac{12(n+1)}{3}+55=\frac{68n+134}{3}.$$  

\noi \textit{Part (iii):} To find the minimum irregularity measurement, we consider the following cases, after applying the colouring mentioned above:

\textit{Case-3.1:} Let $n \equiv 0 \Mod{3}$.  Here, the result obviously follows from the following calculations: $$ M^{\varphi^-}_3(CSF_n)= \sum\limits_{i=1}^{n-1}\sum\limits_{j=2}^{n}|\zeta(v_i)-\zeta(v_j)|=\frac{16n}{3}+\frac{6n}{3}=\frac{22n}{3}.$$ 

\textit{Case-3.2:} Let $n \equiv 1 \Mod{3}$.  In this case, the result  follows from the following calculations: $$ M^{\varphi^-}_3(CSF_n)= \sum\limits_{i=1}^{n-1}\sum\limits_{j=2}^{n}|\zeta(v_i)-\zeta(v_j)|\\=\frac{2(4n-7)}{3}+\frac{2(4n-10)}{3}+\frac{9(n-1)}{3}+17=\frac{25n+8}{3}.$$

\textit{Case-3.3:} Let $n \equiv -1 \Mod{3}$.  Here the result  follows from the following calculations: $$ M^{\varphi^-}_3(CSF_n)=  \sum\limits_{i=1}^{n-1}\sum\limits_{j=2}^{n}|\zeta(v_i)-\zeta(v_j)|\\=\frac{(4n-2)}{3}+\frac{3(4n-5)}{3}+\frac{5(n-2)}{3}+\frac{n+1}{3}+9=\frac{22n+10}{3}.$$

\noi \textit{Part (iv):}  To calculate the total irregularity of $CSF_n$, all the possible vertex pairs from $CSF_n$ have to be considered and their possible colour distances are determined. The possibility of the vertex pairs which contribute to the colour distance can be classified according to the following three cases.
\textit{Case-1:} Let $n \equiv 0 \Mod{3}$.  The combinations possible are charted as $\{1,2\}, \{2,3\}, \{3,4\}$ contributing $1$, $\{1,3\}, \{2,4\}$ contributing $2$  and $\{1,4\}$ contributing $3$. Observe that $\theta(c_1)=\theta(c_2)=\theta(c_3)=\frac{2n}{3}$ and $\theta(c_4)= 1$. Thus, we have
\begin{eqnarray*}
M^{\varphi^-}_4(CSF_n)& = & \frac{1}{2}\sum\limits_{u,v \in V(CSF_n)}|\zeta(u)-\zeta(v)|\\ 
& = & \frac{16n^2+36n}{18}.
\end{eqnarray*}
\textit{Case-2:} Let $n \equiv 1 \Mod{3}$. Here the possible combinations which contributes to the colour distances are $\{1,2\}, \{2,3\}, \{3,4\},  \{4,5\}$ contributing $1$, $\{1,3\}, \{2,4\},  \{3,5\}$ contributing $2$,  $\{1,4\}, \{2,5\}$ contributing $3$and $\{1,5\}$ contributing $4$. We calculate the total irregularity as given below:

\begin{eqnarray*}
M^{\varphi^-}_4(CSF_n) & = & \frac{1}{2}\sum\limits_{u,v \in V(CSF_n)}|\zeta(u)-\zeta(v)|\\
& = & \frac{16n^2+94n-92}{18}
\end{eqnarray*} 
\textit{Case-3:} Let $n \equiv -1 \Mod{3}$. Here the possible combinations which contributes to the colour distances are $\{1,2\}, \{2,3\}, \{3,4\},  \{4,5\}$ contributing $1$, $\{1,3\}, \{2,4\},  \{3,5\}$ contributing $2$,  $\{1,4\}, \{2,5\}$ contributing $3$and $\{1,5\}$ contributing $4$. We calculate the total irregularity as given below:

\begin{eqnarray*}
M^{\varphi^-}_4(CSF_n) & = & \frac{1}{2}\sum\limits_{u,v \in V(CSF_n)}|\zeta(u)-\zeta(v)|\\
& = & \frac{16n^2+74n-32}{18}
\end{eqnarray*} 
\end{proof}

%---------------------------------------------------------------------------------------------------------------------------------
%------------------------------------------------------------------------------------------------------------------------------------------
%----Theorem 4.2 ---- CLOSED SUNFLOWER GRAPH ------MAXIMUM COLORING
%------------------------------------------------------------------------------------------------------------------------------------------
%--------------------------------------------------------------------------------------------------------------------------------------
Using the minimum parameter colouring we can also work on $\varphi_+$ colouring  of closed sunflower graphs. Next theorem deals with this matter. 
\begin{theorem}\label{Thm-4.2}
For a closed sunflower graph $CSF_n$, we have
\begin{enumerate}
\item[(i)] $M_{1}^{\varphi^{-}}(CSF_n)= 
\begin{cases}
\frac{58n+3}{3}; &n \equiv 0 \Mod{3}\\
\frac{100n-73}{3}; &n \equiv 1 \Mod{3}\\
\frac{100n-35}{3}; &n \equiv -1 \Mod{3};
\end{cases}$
\item[(ii)] $M_{2}^{\varphi^{-}}(CSF_n)= 
\begin{cases}
\frac{113n}{3}; &n \equiv 0 \Mod{3}\\
\frac{200n-188}{3}; &n \equiv 1 \Mod{3}\\
\frac{212n-154}{3}; &n \equiv -1 \Mod{3};
\end{cases}$
\item[(iii)] $M_{3}^{\varphi^{-}}(CSF_n)= 
\begin{cases}
\frac{22n}{3}; &n \equiv 0 \Mod{3}\\
\frac{25n+8}{3}; &n \equiv 1 \Mod{3}\\
\frac{22n+10}{3}; &n \equiv -1 \Mod{3};
\end{cases}$
\item[(iv)] $M_{4}^{\varphi^{-}}(CSF_n)= 
\begin{cases}
\frac{16n^2+36n}{18}; &n \equiv 0 \Mod{3}\\
\frac{16n^2+94n-92}{18}; &n \equiv 1 \Mod{3}\\
\frac{16n^2+74n-32}{18}; &n \equiv -1 \Mod{3};
\end{cases}$
\end{enumerate}
\end{theorem}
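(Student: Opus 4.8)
The plan is to establish the displayed formulae by applying the maximum-parameter ($\varphi^{+}$) colouring to $CSF_n$, as announced in the sentence preceding the statement, and then substituting the resulting colour-class strengths and inter-class edge counts into the definitions. Keeping the vertex names of Theorem~\ref{Thm-4.1} (inner-wheel vertices $v_1,\ldots,v_n$, outer vertices $u_1,\ldots,u_n$, centre $v$), the three large independent sets that received the \emph{smallest} colours under $\varphi^{-}$ must now receive the \emph{largest} ones. Thus for $n\equiv 0\Mod 3$ (where $\chi(CSF_n)=4$) the three classes of size $\frac{2n}{3}$ are placed on $c_2,c_3,c_4$ and the centre $v$ on $c_1$; for $n\equiv 1\Mod 3$ (where $\chi(CSF_n)=5$) the three classes of size $\frac{2(n-1)}{3}$ go on $c_3,c_4,c_5$, the pair $\{u_1,v_n\}$ on $c_2$, and $v$ on $c_1$; and for $n\equiv -1\Mod 3$ the three classes of size $\frac{2n-1}{3}$ go on $c_3,c_4,c_5$, with the two leftover vertices $\{v,v_n\}$ on $c_1,c_2$. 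First I would record, for each residue, the strengths $\theta(c_i)$ and the counts $\eta_{ts}$ under this colouring, reading them off directly from the triangle/spoke/cycle structure rather than by any relabelling shortcut.

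Next I would compute $M_1^{\varphi^{+}}$ and $M_2^{\varphi^{+}}$ by direct summation. For the case $n\equiv 0\Mod 3$ one has $\theta(c_1)=1$ and $\theta(c_2)=\theta(c_3)=\theta(c_4)=\frac{2n}{3}$, so that
$$M_1^{\varphi^{+}}(CSF_n)=1+\tfrac{2n}{3}(2^2+3^2+4^2)=\frac{58n+3}{3},$$
and with $\eta_{23}=\eta_{24}=\eta_{34}=\frac{4n}{3}$, $\eta_{12}=\eta_{13}=\eta_{14}=\frac{n}{3}$ one obtains
$$M_2^{\varphi^{+}}(CSF_n)=\tfrac{n}{3}(2+3+4)+\tfrac{4n}{3}(6+8+12)=\frac{113n}{3}.$$
The residues $n\equiv\pm1\Mod 3$ are handled identically: placing the leftover vertices on the low colours $c_1,c_2$ enlarges the high-subscript coefficients in $\sum_j\theta(c_j)\,j^2$ and in $\sum_{t<s}ts\,\eta_{ts}$, yielding the stated $\frac{100n-73}{3}$, $\frac{100n-35}{3}$ for $M_1^{\varphi^{+}}$ and $\frac{200n-188}{3}$, $\frac{212n-154}{3}$ for $M_2^{\varphi^{+}}$.

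For the irregularity indices $M_3$ and $M_4$ I would argue by invariance rather than recomputation. Both are built solely from the label \emph{differences} $|\zeta(v_i)-\zeta(v_j)|$, via $M_3=\sum_{t<s}(s-t)\eta_{ts}$ and $M_4=\sum_{t<s}(s-t)\,\theta(c_t)\theta(c_s)$. The $\varphi^{+}$ colouring differs from the $\varphi^{-}$ colouring of Theorem~\ref{Thm-4.1} only by the order-reversing bijection $j\mapsto \chi(CSF_n)+1-j$ on colour labels, and this reversal preserves every difference $|s-t|$ while merely permuting the data $\{\eta_{ts}\}$ and $\{\theta(c_t)\}$; hence $M_3^{\varphi^{+}}$ and $M_4^{\varphi^{+}}$ equal the corresponding values already computed, which is exactly why Parts~(iii) and~(iv) repeat those of Theorem~\ref{Thm-4.1}. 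I expect the main obstacle to lie in the edge-count bookkeeping for $\eta_{ts}$ in the two cases $n\equiv\pm1\Mod 3$: the leftover vertices $u_1,v_n$ (respectively $v,v_n$) occupy the lowest colours, and one must tally precisely how their incidences with the triangle edges, the inner-wheel spokes, and the outer cycle distribute among the five classes so that the counts sum to the $5n$ edges of $CSF_n$. Once these $\eta_{ts}$ are pinned down consistently, every index in each residue class follows by routine substitution.
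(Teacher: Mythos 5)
Your proposal is correct and, for parts (i) and (ii), follows essentially the route the paper takes: interpret the theorem (despite its $\varphi^{-}$ superscripts, a typo) as the $\varphi^{+}$ values, realise the $\varphi^{+}$ colouring by giving the three large independent classes the highest colour labels and the leftover vertices ($v$; or $u_1,v_n$ and $v$; or $v,v_n$) the lowest ones, record the strengths $\theta(c_i)$ and counts $\eta_{ts}$, and substitute. Your explicit $n\equiv 0\Mod{3}$ computations reproduce the paper's data exactly ($\theta(c_2)=\theta(c_3)=\theta(c_4)=\tfrac{2n}{3}$, $\eta_{23}=\eta_{24}=\eta_{34}=\tfrac{4n}{3}$, $\eta_{12}=\eta_{13}=\eta_{14}=\tfrac{n}{3}$), and the edge-count bookkeeping you defer for $n\equiv\pm1\Mod{3}$ is precisely what the paper supplies as a bare list (e.g.\ $\eta_{45}=\eta_{34}=\tfrac{4n-7}{3}$, $\eta_{35}=\tfrac{4n-10}{3}$, $\eta_{23}=\eta_{25}=3$, $\eta_{24}=2$, $\eta_{12}=1$ for $n\equiv 1\Mod{3}$) before declaring that ``the balance proof follows exactly like that of the previous theorem''; one can check these lists do yield $\tfrac{200n-188}{3}$ and $\tfrac{212n-154}{3}$. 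Where you genuinely depart from the paper is in parts (iii) and (iv): the paper implicitly recomputes them from the reversed data, whereas you observe that the $\varphi^{+}$ colouring is the image of the $\varphi^{-}$ colouring of Theorem~\ref{Thm-4.1} under the order-reversing relabelling $j\mapsto \chi(CSF_n)+1-j$, which preserves every difference $|\zeta(v_i)-\zeta(v_j)|$ and hence forces $M_3^{\varphi^{+}}=M_3^{\varphi^{-}}$ and $M_4^{\varphi^{+}}=M_4^{\varphi^{-}}$ without any tallying. This symmetry argument is cleaner and more general than the paper's treatment (it explains, in one line, why parts (iii)--(iv) of Theorems~\ref{Thm-4.1} and~\ref{Thm-4.2} coincide, and why the same coincidence occurs in Theorems~\ref{Thm-2.1}--\ref{Thm-2.2} and \ref{Thm-3.1}--\ref{Thm-3.2}). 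One small slip of notation: for $n\equiv -1\Mod{3}$ you place $\{v,v_n\}$ on $c_1,c_2$ in that order, while the reversal you invoke (and the paper's colouring) assigns $v\mapsto c_2$ and $v_n\mapsto c_1$; this does not affect $M_1$, but it is the assignment under which the paper's $\eta_{ts}$ lists, and hence the stated value of $M_2$, are computed, so it should be fixed before the final bookkeeping.
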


%---------------------------------------------------------------------
%%%----PROOF- Thm 4.2 -----------------------------
%--------------------------------------------------------------------

\begin{proof}~
%------------------------------Introduction Part----------------------------------------
A closed sunflower graph $CSF_n$ has chromatic number $4$ when $n \equiv 0 \Mod{3}$ and has chromatic number $5$ otherwise. Let $v_1, v_2, \ldots v_n$ be the vertices of the inner wheel and $u_1, u_2, \ldots u_n$ be the vertices on the rim of the outer wheel and $v$ be the central vertex. In order to calculate the minimum values of chromatic Zagreb indices we apply the $\varphi^{-}$ colouring pattern to $CSF_n$ as described  below.\\ 
Let $n \equiv 0 \Mod{3}$ be assumed. When $n \equiv 0 \Mod{3}$, obeying the rules of maximum colouring, we can find three colour classes with same cardinality $\frac{2n}{3}$ and we colour them with maximal colours $c_4, c_3, c_2$. The central vertex $v$ is coloured with $c_1$.So we have $\theta(c_4)=\theta(c_3)=\theta(c_2)=\frac{2n}{3}$ and $\theta(c_1)= 1$. Also, $\eta_{24}= \eta_{23}=\eta_{34}=\frac{4n}{3}$ and $\eta_{12}= \eta_{13}=\eta_{14}=\frac{n}{3}$.\\
Let $n \equiv 1 \Mod{3}$ be assumed. Here we form three colour classes with the maximum independent sets having the same cardinality $\frac{2(n-1)}{3}$ and we colour them with maximal colours $c_5, c_4, c_3$. Also the vertices $u_1$ and $v_n$ are coloured with  $c_2$ and the central vertex $v$ is coloured with $c_1$. So here we have the values, $\theta(c_5)=\theta(c_4)=\theta(c_3)=\frac{2(n-1)}{3}$,  $\theta(c_2)= 2$, $\theta(c_1)= 1$ and $\eta_{45}= \eta_{34}=\frac{4n-7}{3}$, $\eta_{35}=\frac{4n-10}{3}$, $\eta_{13}= \eta_{14}=\eta_{15}=\frac{n-1}{3}$, $\eta_{23}= \eta_{25}=3$, $ \eta_{24}=2$  and $\eta_{12}=1$.\\
Let $n \equiv -1 \Mod{3}$ be assumed. Here we form three colour classes with the maximum independent sets having the same cardinality $\frac{2n-1}{3}$ and we colour them with maximal colours $c_5, c_4, c_3$. Also the balance two vertices $v$ and $v_n$ are coloured with  $c_2$ and $c_1$ respectively. Thus the following are the values: $\theta(c_5)=\theta(c_4)=\theta(c_3)=\frac{2n-1}{3}$,  $\theta(c_1)= \theta(c_2)= 1$, $\eta_{45}= \frac{4n-2}{3}$, $\eta_{34}=\eta_{35}=\frac{4n-5}{3}$, $\eta_{25}= \eta_{24}=\frac{n-2}{3}$, $\eta_{23}=\frac{n+1}{3}$, $\eta_{13}= 2$ and $\eta_{12}=\eta_{14}=\eta_{15}= 1$. \\
The balance proof follows exactly like that of just previous theorem.

\end{proof}
%%%%%%%%%%%%
%%%%%%%%%%%%
%------------------------------------------------------------------------------------------------------------------------------------------------------
%------------------------------------------------------------------------------------------------------------------------------------------------------
%----------------------SECTION - 5 -------------BLOSSOM GRAPHS----------------------------------------------
%------------------------------------------------------------------------------------------------------------------------------------------------------
%------------------------------------------------------------------------------------------------------------------------------------------------------
\section{Chromatic Topological Indices of Blossom Graphs}
A \textit{blossom graph} $Bl_n$ is the graph obtained by joining all vertices of the outer cycle of a closed sunflower graph $CSF_n$ to its central vertex. The chromatic Zagreb indices of blossom graphs are determined in the following results.
%%%%%%%%%%%%%%%%%%%%%%%%%%%%%%%%%%%%%%%%%%%%%%%%%%%%
%-------------------------------------------------------------------------------------------------
%-------------------------------------------------------------------------------------------------
%----Theorem 5.1 ----BLOSSOM GRAPH ------ MINIMUM COLORING---
%-------------------------------------------------------------------------------------------------
%------------------------------------------------------------------------------------------------
\begin{theorem}\label{Thm-5.1}
For the blossom graph $Bl_n= $, we have 
\begin{enumerate}
\item[(i)]~ $M_{1}^{\varphi^{-}}(Bl_n)=
\begin{cases}
\frac{30n+25}{2}; & \text{if $n$ is even}\\
\frac{15n+40}{2}; & \text{if $n$ is odd};
\end{cases}$
\item[(ii)]~ $M_{2}^{\varphi^{-}}(Bl_n)= 
\begin{cases}
\frac{95n}{2}; & \text{if $n$ is even}\\
\frac{99n-89}{2}; & \text{if $n$ is odd};
\end{cases}$
\item [(iii)]~ $M_{3}^{\varphi^{-}}(Bl_n)= 
\begin{cases}
\frac{22n}{2}; & \text{if $n$ is even}\\
\frac{25n-25}{2}; & \text{if $n$ is odd};
\end{cases}$
\item[(iv)]~ $M_{4}^{\varphi^{-}}(Bl_n)=
\begin{cases}
\frac{5n^2+10n}{8}; & \text{if $n$ is even}\\
\frac{11n^2+40n+29}{8}; & \text{if $n$ is odd}.
\end{cases}$
\end{enumerate}
\end{theorem}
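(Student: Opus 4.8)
The plan is to mirror the three-step template already used for Theorems~\ref{Thm-2.1}--\ref{Thm-4.1}: fix a $\varphi^{-}$-colouring, record the colour-class strengths $\theta(c_j)$ and the edge--colour incidence numbers $\eta_{ts}$, and substitute into the four defining sums. First I would fix notation: let $w$ be the central vertex, $v_1,\dots,v_n$ the inner cycle, $u_1,\dots,u_n$ the outer cycle, with triangles $u_iv_iv_{i+1}$, both cycles present, and $w$ adjacent to all $2n$ rim vertices. Since $w$ is universal it must occupy a colour class of its own, whence $\chi(Bl_n)=1+\chi(Bl_n-w)$, and the whole problem reduces to understanding the proper colourings of $G'=Bl_n-w$, the triangulated double cycle.

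The crucial structural step is to compute $\chi(G')$. With three colours available, each apex $u_i$ is forced onto the unique colour absent from $\{\,\zeta(v_i),\zeta(v_{i+1})\,\}$, so a proper colouring of the outer cycle survives only when $\zeta(v_i)\neq\zeta(v_{i+2})$ for every $i$; this is precisely a proper $3$-colouring of the square $C_n^{2}$, which exists iff $n\equiv 0\Mod{3}$ (for $n\ge 6$; the cases $n=4,5$ I would verify by hand). Hence $\chi(Bl_n)=4$ when $n\equiv 0\Mod{3}$ and $\chi(Bl_n)=5$ otherwise, so the genuine case split is governed by $n\bmod 3$ exactly as for $CSF_n$, and the even/odd phrasing in the statement must be reconciled with this. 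I expect this determination, together with the identification of the maximum independent sets used to seed the $\varphi^{-}$-colouring, to be the main obstacle: the interlocking triangles couple the two cycles, so a maximum independent set is not merely every second vertex of one rim, and its cardinality must be found by an explicit selection in each residue class.

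Once $\chi$ and the independent sets are in hand, I would build the $\varphi^{-}$-colouring greedily --- the largest admissible class for $c_1$, then for $c_2$, and so on, placing the universal vertex $w$ in the top-indexed class --- and tabulate $\theta(c_1),\dots,\theta(c_{\chi})$ together with the $\eta_{ts}$. The latter I would obtain by splitting the $6n$ edges into the inner cycle ($n$ edges), the outer cycle ($n$ edges), the triangle chords ($2n$ edges), and the central spokes ($2n$ edges); each family contributes transparently to the relevant $\eta_{ts}$, and I would cross-check via $\sum_{t<s}\eta_{ts}=6n$.

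The four evaluations are then routine arithmetic. For part~(i) I substitute into $M_1^{\varphi^{-}}=\sum_j \theta(c_j)\,j^2$; for part~(ii) into $M_2^{\varphi^{-}}=\sum_{t<s} ts\,\eta_{ts}$; for part~(iii) I regroup the edges by colour distance $|s-t|$ to read off $M_3^{\varphi^{-}}$. Part~(iv) differs in that $M_4^{\varphi^{-}}$ ranges over all vertex pairs rather than edges, so I would use $M_4^{\varphi^{-}}=\tfrac12\sum_{t<s}(s-t)\,\theta(c_t)\theta(c_s)$, which produces the quadratic-in-$n$ expressions. As in the passage from Theorem~\ref{Thm-3.1} to Theorem~\ref{Thm-3.2}, the symmetry of the colour distances forces $M_3$ and $M_4$ to agree for the $\varphi^{+}$ variant while $M_1,M_2$ change, so the $\varphi^{+}$ computation would reuse the same $\eta_{ts}$ bookkeeping under the reversed colour indexing. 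Beyond the chromatic-number determination, this $\eta_{ts}$ bookkeeping for such a dense graph is the second genuine source of error to guard against.
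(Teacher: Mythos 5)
Your proposal takes a genuinely different route from the paper, and your structural step is correct where the paper's is not. The paper's proof opens by asserting that ``it is so clear that'' $\chi(Bl_n)=5$ for every $n$, then exhibits parity-dependent $5$-colourings (outer cycle alternating $c_1,c_2$, inner cycle alternating $c_3,c_4$, centre $c_5$ when $n$ is even, and an ad hoc variant when $n$ is odd) and tabulates $\theta(c_j)$ and $\eta_{ts}$ from those colourings. You instead delete the universal centre, identify $Bl_n-w$ as the $n$-antiprism, and determine its chromatic number by the forced-apex argument: with three colours each apex is forced to the colour missing from its two inner neighbours, so a $3$-colouring exists precisely when the inner cycle admits a proper $3$-colouring of $C_n^2$, i.e.\ precisely when $n\equiv 0\Mod{3}$. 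Hence $\chi(Bl_n)=4$ when $3\mid n$ and $\chi(Bl_n)=5$ otherwise. This is right, and it refutes the paper's starting point rather than reproducing it: for $n\equiv 0\Mod{3}$ a minimum parameter ($\varphi^-$) colouring uses only four colours, with class strengths $\tfrac{2n}{3},\tfrac{2n}{3},\tfrac{2n}{3},1$, giving for example $M_1^{\varphi^-}(Bl_6)=4+16+36+16=72$, strictly below the theorem's claimed $\tfrac{30\cdot 6+25}{2}$ (which is not even an integer; the paper's own arithmetic in its proof actually produces $15n+25$ for even $n$, inconsistent with its stated formula). So the correct case split for $Bl_n$ is by $n\bmod 3$, exactly as in Theorem~\ref{Thm-4.1} for $CSF_n$, not by parity.

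The gap on your side is one of resolution, not of mathematics: your write-up is a plan whose decisive tables ($\theta$, $\eta_{ts}$) and four evaluations are promised but never carried out, and --- more importantly --- once your chromatic-number computation is in place, those evaluations \emph{cannot} terminate at the stated right-hand sides for $n\equiv 0\Mod{3}$. You noticed the tension (``the even/odd phrasing in the statement must be reconciled with this'') but deferred it; the honest conclusion is that no reconciliation exists and the statement as printed is false for $n\equiv 0\Mod 3$, with the paper's proof failing at its unjustified claim $\chi(Bl_n)=5$. Your plan, completed, would yield a corrected theorem with a mod-$3$ trichotomy; as a proof of the statement as written, it necessarily fails, and you should say so explicitly. (Also keep the small cases $n=4,5$ separate as you indicate, since $C_5^2=K_5$, though this does not change the conclusion.)
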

%%%%%%%%%%%%%%%%%%%%%%%%%%%%%
\begin{proof}~
%------------------------------Introduction Part----------------------------------------
It is so clear that the blossom graph $Bl_n$ has chromatic number $5$. In $Bl_n$ let's put $v_1, v_2, \ldots v_n$ be the vertices on the outer cycle, $u_1, u_2, \ldots u_n$ be the vertices on the inner cycle and $u$ be the central vertex. Now we apply the $ \varphi^{-}$ colouring pattern to $Bl_n$ as described  below.

When $n$ is even, the outer cycle can be coloured with $c_1$ and $c_2$ alternatively and the inner cycle can be coloured with $c_3$ and $c_4$ alternatively such that both colour classes have cardinality $\frac{n}{2}$ .  We colour the central vertex $u$ with colour $c_5$. 

Now, let $n$ be odd. Here we colour the vertices $\{v_1, v_3, \ldots v_{n-2},u_{n-1} \}$ with colour  $c_1$ and $\{v_2, v_4, \ldots v_{n-1}, u_{n}\}$ with colour  $c_2$. The vertices $\{u_1, u_3, \ldots u_{n-2}, v_n \}$ with colour  $c_3$ and $\{u_2, u_4, \ldots u_{n-3}\}$ with colour  $c_4$. The central vertex is coloured with colour $c_5$. Now we proceed to the four parts of the theorem.

%
%%------------------------------PART 1--------------------------------------------------------------
\noi \textit{Part (i):}  In order to find $M_{1}^{\varphi^{-}}$ of $Bl_n$, we first colour the vertices as mentioned above and then proceed to consider the following cases.

%%%
\textit{Case-1:} Let $n$ be even, then we have $\theta(c_1)=\theta(c_2)=\theta(c_3)=\theta(c_4)=\frac{n}{2}$ and $\theta(c_5)= 1$. Therefore, the corresponding chromatic topological index  is given by
$$M^{\varphi^-}_1(Bl_n)=\sum\limits_{i=1}^{5}(\zeta(v_i))^2=30n+25.$$

%%%
\noi \textit{Case-2:} Let $n$ be odd. Then, we have $\theta(c_1)=\theta(c_2)=\theta(c_3)=\frac{n+1}{2}$, $\theta(c_4)=\frac{n-3}{2}$ and $\theta(c_5)=1.$ Now, by the definition of first chromatic Zagreb index, we have $$M^{\varphi^-}_1(Bl_n)=\sum\limits_{i=1}^{5}(\zeta(v_i))^2=15n+40.$$
%
%%------------------------------PART 2 --------------------------------------------------------------
\noi \textit{Part (ii):} We colour the vertices as per the instructions in the introductory part for odd cases of $n$. For even case, the colouring is mentioned in Case 1. Now consider the following cases:

\noi \textit{Case- 1:} Let $n$ be even. In order to get the minimum value of all second Zagreb indices, we follow like this. The outer cycle can be coloured with $c_1$ and $c_4$ alternatively and the inner cycle can be coloured with $c_2$ and $c_3$ alternatively such that both colour classes have cardinality $\frac{n}{2}$ .  We colour the central vertex $u$ with colour $c_5$. Here we see that $\eta_{14}= \eta_{23}=n$and  $\eta_{12}=\eta_{13}= \eta_{15}=\eta_{25}=\eta_{34}=\eta_{35}=\eta_{45}=\eta_{24}=\frac{n}{2}$. 
The definition of second chromatic Zagreb index gives the sum $$M^{\varphi^-}_2(Bl_n)=\sum\limits_{1 \leq t,s \leq \chi(Bl_n)}^{t<s}ts\eta_{ts}=6n+4n+\frac{75n}{2}=\frac{95n}{2}.$$  
\noi \textit{Case- 2:} Let $n$ be odd. Here we see that $\eta_{12}=n+1, \eta_{13}=\eta_{23}=\frac{n+5}{2},\eta_{34}=n-3, \eta_{15}=\eta_{25}=\eta_{35}=\frac{n+1}{2}$and $ \eta_{14}=\eta_{24}=\eta_{45}=\frac{n-3}{2}$. Hence, we have the sum $$M^{\varphi^-}_2(Bl_n)=\sum\limits_{1 \leq t,s \leq \chi(Bl_n)}^{t<s}ts\eta_{ts}= \frac{99n-89}{2}.$$   
%%----------------------------------------PART 3----------------------------------------------------------------------------------------------------------------
\noi \textit{Part (iii):} To find the minimum irregularity measurement, consider the following cases:

\noi \textit{Case- 1:} Let $n$ be even. Here we see that $\eta_{12}+ \eta_{23}+ \eta_{34}+ \eta_{45}$ edges contributes the distance $1$ to the total summation while $\eta_{13}+ \eta_{24}+ \eta_{35}$ contributes the distance $2$, $\eta_{14}+ \eta_{25}$ contributes the distance $3$ and $\eta_{15}$ contributes the distance $4$. The result follows from the following calculations:
$$ M^{\varphi^-}_3(Bl_n)= \sum\limits_{i=1}^{n-1}\sum\limits_{j=2}^{n}|\zeta(v_i)-\zeta(v_j)|=2n+9n=11n.$$ 
\noi \textit{Case- 2:} 
Let $n$ be odd. Here also we have that $\eta_{12}+ \eta_{23}+ \eta_{34}+ \eta_{45}$ edges contributes the distance $1$ to the total summation while $\eta_{13}+ \eta_{24}+ \eta_{35}$ contributes the distance $2$, $\eta_{14}+ \eta_{25}$ contributes the distance $3$ and $\eta_{15}$ contributes the distance $4$. Then the result follows from the following calculations: $$M^{\varphi^-}_3(Bl_n)= \sum\limits_{i=1}^{n-1}\sum\limits_{j=2}^{n}|\zeta(v_i)-\zeta(v_j)|=(n+1)+3(n-2)+4(n-3)+\frac{9(n+1)}{2}= \frac{25(n-1)}{2}.$$ 
%-------------------------------------------------------PART 4--------------------------------------------------------------------------------
\noi \textit{Part (iv):}  To calculate the total irregularity of $Bl_n$, all the possible vertex pairs from $Bl_n$ have to be considered and their possible colour distances are determined. The possibility of the vertex pairs which contribute to the colour distance can be classified according to the following two cases.

\noi \textit{Case- 1:} Let $n$ be even. The combinations possible are charted as $\{1,2\}, \{2,3\}, \{3,4\},  \{4,5\}$ contributing $1$, $\{1,3\}, \{2,4\},  \{3,5\}$ contributing $2$,  $\{1,4\}, \{2,5\}$ contributing $3$and $\{1,5\}$ contributing $4$. Observe that $\theta(c_1)=\theta(c_2)=\theta(c_3)=\theta(c_4)=\frac{n}{2}$ and $\theta(c_5)= 1$. Thus, we have
\begin{eqnarray*}
M^{\varphi^-}_4(Bl_n)& = & \frac{1}{2}\sum\limits_{u,v \in V(Bl_n)}|\zeta(u)-\zeta(v)|\\ 
& = & \frac{5n^2+10n}{8}.
\end{eqnarray*}

\noi \textit{Case- 2:} Let $n$ be odd. Here the possible combinations which contributes to the colour distances are $\{1,2\}, \{2,3\}, \{3,4\},  \{4,5\}$ contributing $1$, $\{1,3\}, \{2,4\},  \{3,5\}$ contributing $2$,  $\{1,4\}, \{2,5\}$ contributing $3$and $\{1,5\}$ contributing $4$. We calculate the total irregularity as given below:

\begin{eqnarray*}
M^{\varphi^-}_4(Bl_n) & = & \frac{1}{2}\sum\limits_{u,v \in V(Bl_n)}|\zeta(u)-\zeta(v)|\\
& = & \frac{11n^2+40n+29}{8}
\end{eqnarray*}

\end{proof}
%-------------------------------------------------------------------------------------------------
%-------------------------------------------------------------------------------------------------
%-------------------------------------------------------------------------------------------------
Using the minimum parameter colouring we can also work on $\varphi_+$ colouring  of blossom graphs. Next theorem deals with this matter.

\begin{theorem}\label{Thm-5.2}
For a blossom graph $Bl_n $, we have 
\begin{enumerate}
\item[(i)]~ $M_{1}^{\varphi^{+}}(Bl_n)=
\begin{cases}
27n+1; & \text{if $n$ is even}\\
27n+20; & \text{if $n$ is odd};
\end{cases}$
\item[(ii)]~ $M_{2}^{\varphi^{+}}(Bl_n)= 
\begin{cases}
63n; & \text{if $n$ is even}\\
\frac{111n+91}{2}; & \text{if $n$ is odd};
\end{cases}$
\item [(iii)]~ $M_{3}^{\varphi^{+}}(Bl_n)= 
\begin{cases}
\frac{19n}{2}; & \text{if $n$ is even}\\
11n+1; & \text{if $n$ is odd};
\end{cases}$
\item[(iv)]~ $M_{4}^{\varphi^{+}}(Bl_n)=
\begin{cases}
\frac{11n^2+12n}{8}; & \text{if $n$ is even}\\
\frac{10n^2+16n-5}{8}; & \text{if $n$ is odd}.
\end{cases}$
\end{enumerate}
\end{theorem}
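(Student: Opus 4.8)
The plan is to mirror the proof of Theorem~\ref{Thm-5.1}, replacing the $\varphi^-$ pattern by a $\varphi^+$ pattern and then evaluating the four indices in turn. First I would record that $\chi(Bl_n)=5$ and keep the labelling $v_1,\dots,v_n$ (outer cycle), $u_1,\dots,u_n$ (inner cycle), $u$ (centre). Since $u$ is adjacent to every cycle vertex, it must form a singleton colour class; under the $\varphi^+$ philosophy the largest independent sets carry the largest-subscript colours, so this singleton receives $c_1$ while the cycle classes receive $c_2,\dots,c_5$. Concretely, for even $n$ the alternating $2$-colourings of the two cycles give four classes of cardinality $\tfrac n2$, which I would label $c_2,c_3,c_4,c_5$; for odd $n$ the partition of Theorem~\ref{Thm-5.1} yields three classes of size $\tfrac{n+1}2$ and one of size $\tfrac{n-3}2$, labelled so that the small class takes $c_2$ and the three large classes take $c_3,c_4,c_5$.

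With the cardinalities $\theta(c_i)$ fixed, Part~(i) is immediate from $M_1^{\varphi^+}(Bl_n)=\sum_j \theta(c_j)\,j^2$: the even case gives $1+\tfrac n2(4+9+16+25)=27n+1$ and the odd case gives $1+\tfrac{n-3}2\cdot 4+\tfrac{n+1}2(9+16+25)=27n+20$, matching the stated values. For Part~(ii) I would enumerate the edge-colour incidence numbers $\eta_{ts}$, keeping in mind that $|E(Bl_n)|=6n$, split as the $2n$ cycle edges, the $2n$ triangle cross-edges $v_iu_j$, and the $2n$ spokes from the centre; substituting into $M_2^{\varphi^+}=\sum_{t<s}ts\,\eta_{ts}$ should produce $63n$ (even) and $\tfrac{111n+91}{2}$ (odd).

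Parts~(iii) and~(iv) I would settle by the colour-distance bookkeeping of Theorem~\ref{Thm-5.1}: for $M_3$ I group the edges by the value $|t-s|\in\{1,2,3,4\}$ and form $\sum_{t<s}|t-s|\,\eta_{ts}$, obtaining $\tfrac{19n}{2}$ and $11n+1$; for $M_4$ I sum $|t-s|\,\theta(c_t)\theta(c_s)$ over all unordered pairs of classes (which depends only on the cardinalities and labels, not on the edge set), which should collapse to $\tfrac{11n^2+12n}{8}$ and $\tfrac{10n^2+16n-5}{8}$.

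The main obstacle will be the edge counting in Parts~(ii)--(iii). Unlike $M_1$ and $M_4$, the indices $M_2$ and $M_3$ depend on \emph{which} independent set carries each subscript, so among the admissible $\varphi^+$ labelings I must select the one realising the extremal value (this is why the even-$n$ arrangement used here differs from the one in Theorem~\ref{Thm-5.1}), and then correctly distribute all $6n$ edges into the $\eta_{ts}$ without double counting the shared triangle vertices. Verifying mutual consistency---that $\sum_{t<s}\eta_{ts}=6n$ and that each $\theta(c_i)$ is compatible with the recorded adjacencies---is the step most prone to arithmetic error, and is where I would concentrate the checking before reading off the four closed forms.
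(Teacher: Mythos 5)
Your overall plan is the same as the paper's: the paper's entire proof of this theorem is the single sentence that it ``follows exactly'' as in Theorem~\ref{Thm-5.1}, and you mirror that proof with the colour labels reversed. Your Part~(i) is genuinely carried out and is correct: with $\theta(c_1)=1$, $\theta(c_2)=\tfrac{n-3}{2}$ (odd case) and the large classes on the high labels, the sums do give $27n+1$ and $27n+20$. The gap is that for Parts~(ii)--(iv) you never perform the decisive bookkeeping; you only assert that it ``should produce'' the stated closed forms. It does not, and no choice within your scheme can. Take Part~(iii) with $n$ even: once the centre gets $c_1$ and the four alternating classes get $c_2,\dots,c_5$, there are only three essentially different pairings, and they give $M_3=\sum_{t<s}|t-s|\,\eta_{ts}$ equal to $11n$ (outer $\{c_2,c_3\}$, inner $\{c_4,c_5\}$, i.e.\ the label-reversal of the colouring in Theorem~\ref{Thm-5.1}) or $12n$ (the other two pairings); the stated $\tfrac{19n}{2}$ is attained by none of them, and in fact cannot be correct, because $M_3^{\varphi^+}$ is by definition a maximum over the very colourings over which $M_3^{\varphi^-}$ is a minimum, so $M_3^{\varphi^+}(Bl_n)\ge M_3^{\varphi^-}(Bl_n)=11n>\tfrac{19n}{2}$.

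The same failure hits Parts~(ii) and~(iv). For~(iv), your own (correct) remark that $M_4$ depends only on the cardinalities $\theta(c_j)$ and their labels is fatal to the stated values: the $\varphi^+$ cardinality sequence is exactly the $\varphi^-$ sequence under the relabelling $j\mapsto 6-j$, which preserves every distance $|t-s|$, hence $M_4^{\varphi^+}(Bl_n)=M_4^{\varphi^-}(Bl_n)$ identically; since the formulas in~(iv) here differ from those in Theorem~\ref{Thm-5.1}(iv), your sum cannot ``collapse'' to both (direct evaluation with $\theta(c_1)=1$, $\theta(c_2)=\cdots=\theta(c_5)=\tfrac n2$ gives $\tfrac12\sum_{t<s}|t-s|\,\theta(c_t)\theta(c_s)=\tfrac{5n^2+10n}{4}$, not $\tfrac{11n^2+12n}{8}$). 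For~(ii) with $n$ even, the largest value of $\sum_{t<s}ts\,\eta_{ts}$ over your admissible labelings is $\tfrac{111n}{2}$ (inner cycle on $\{c_4,c_5\}$, outer on $\{c_2,c_3\}$, centre $c_1$), well short of $63n$. So the step you defer---``read off the four closed forms''---is exactly the step that fails: a faithful execution of your plan refutes the stated formulas (and exposes an inconsistency between Theorems~\ref{Thm-5.1} and~\ref{Thm-5.2}) rather than proving them. The place you flag as merely ``prone to arithmetic error'' is where the proposal, and the statement it targets, actually break down.
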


\begin{proof}~
The proof follows exactly as mentioned in the proof Theorem \ref{Thm-5.1}.
\end{proof}

\section{Conclusion}

The topic discussed in this paper do find a variety of applications in chemical graph theory and distribution theory. An outline of chromatic Zagreb indices and irregularity indices of some cycle related graphs are provided in this paper. The study seems to be promising for further studies as these indices can be computed for many graph classes and classes of derived graphs. The chromatic topological indices can be determined for graph operations, graph products and
graph powers. The study on the same with respect to different types of graph colourings also seem to be much promising. The concept can be extended to edge colourings and map colourings also. In Chemistry, some interesting studies using the above-mentioned concepts are possible if $c(v_i)$ (or $\zeta(v_i)$) assumes the values such as energy, valency, bond strength etc. Similar studies are possible in various other fields. All these facts highlight the wide scope for further research in this area. Even the chromatic version of other topological indices gives new areas of research with massive applications.

\section*{Acknowledgement}

The first author would like to acknowledge the academic helps rendered by Centre for Studies in Discrete Mathematics, Vidya Academy of Science and Technology, Thrissur, Kerala, India.


\begin{thebibliography}{20}

\bibitem{Abd} H. Abdo, S. Brandt and D. Dimitrov, (2014). The total irregularity of a graph, \textit{Discrete Math. Theor. Computer Sci.}, \textbf{16}(1): 201--206.

\bibitem{Alb} M.O. Alberton, (1997). The irregularity of a graph, \textit{Ars Combin.}, \textbf{46}: 219--225.

\bibitem{BM1} J.A. Bondy, U.S.R. Murty, (2008). {\em Graph theory}, Springer, New York.

\bibitem{BLS} A. Brandst\"{a}dt, V.B. Le and J.P. Spinrad, (1999). {\em Graph classes: A survey}, SIAM, Philadelphia.

\bibitem{Cha} G. Chartrand and L. Lesniak, (2000). \textit{Graphs and digraphs}, CRC Press, Boca Raton.

\bibitem{ND} N. Deo, (1974). {\em Graph theory with application to engineering and computer science}, Prentice Hall of India, New Delhi.

\bibitem{Fat} G. H. Fath-Tabar, Old and new Zagreb indices of graphs, \textit{MATCH Commun. Math. Comput. Chem.}, \textbf{65}: 79--84.

\bibitem{Gut} I. Gutman and N. Trinajstic, (1972). Graph theory and molecular orbitals, total $\pi$ electron energy of alternant hydrocarbons, \textit{Chem. Phys. Lett.}, \textbf{17}: 535--538, DOI:10.1016/0009-2614(72)85099-1. 

\bibitem{FH} F. Harary, (2001). {\em Graph theory}, Narosa Publications, New Delhi.

\bibitem{JT1} T.R. Jensen, B. Toft, (1995). {\em Graph colouring problems}, John Wiley \& Sons, New York.

\bibitem{MK1} M. Kubale, (2004). {\em Graph colourings}, American Math. Soc., Rhode Island.

\bibitem{KSM} J.Kok, N.K. Sudev, U. Mary, (2017). On chromatic Zagreb indices of certain graphs,  \emph{Discrete Math. Algorithm. Appl.}, \textbf{9}(1), 1-11, DOI: 10.1142/S1793830917500148.

\bibitem{Kok2} J. Kok, N. K. Sudev and K. P. Chithra, (2016). General colouring sums of graphs, \emph{Cogent Math.}, {\bf 3}(1),: 1--11.

\bibitem{RoSu} S. Rose and N.K. Sudev, On certain chromatic topological indices of some mycielski graphs,\textit{ J. Math. Fund. Sci.}, communicated.

\bibitem{Tim} H. Timmerman, T. Roberto, V. Consonni, R. Mannhold and H. Kubinyi, (2002). \textit{Handbook of molecular descriptors}, Wiley-VCH.

\bibitem{EWW} E.W. Weisstein, (2011). {\em CRC concise encyclopedia of mathematics}, CRC Press, Boca Raton.

\bibitem {DBW} D.B. West, (2001). {\em Introduction to graph theory}, Pearson Education, Delhi.

\bibitem {Zho} B. Zhou, (2004). Zagreb indices, \textit{MATCH Commun. Math. Comput. Chem.}, \textbf{52}: 113--118.

\bibitem{Zhou} B. Zhou and I. Gutman, (2005). Further properties of Zagreb indices, \textit{MATCH Commun. Math. Comput. Chem.}, \textbf{54}: 233--239.

\end{thebibliography}
\end{document}